\newcommand\footnoteref[1]{\protected@xdef\@thefnmark{\ref{#1}}\@footnotemark}
\newtheorem{lemma}{Lemma}
\newtheorem{theorem}{Theorem}
\def\N{\mathbb{N}}
\def\C{\mathbb{C}}
\def\R{\mathbb{R}}
\def\aa{\mathbf{a}}
\def\bb{\mathbf{b}}
\title{Uniformly convergent expansions for the generalized hypergeometric functions of the Bessel and Kummer types}
\author{Jos\'e L.L\'opez$^*$\,\,,\,\,Pedro J.Pagola$^*$\,\,,\,\,Dmitrii B.Karp$^\dag$
	\\
	\\
	\textsf{\textit{$*$ Dpto. de Ingenier\'{\i}a Matem\'{a}tica e Inform\'{a}tica,}}\\
    \textsf{\textit{Universidad P\'{u}blica de Navarra, Spain}}\\
	\\
	\textsf{\textit{$\dag$ Far Eastern Federal University, Vladivostok, Russia}}\\
	\textsf{\textit{and Institute of Applied Mathematics FEBRAS}}
\\[8pt]
	\textit{email: jl.lopez@unavarra.es, pedro.pagola@unavarra.es, dimkrp@gmail.com}}
\date{}
\begin{document}
	
	\normalsize \maketitle
	
\begin{abstract}
We derive a convergent expansion of the generalized hypergeometric function ${}_{p-1}F_p$ in terms of the Bessel functions ${}_{0}F_1$ that holds uniformly with respect to the argument in any  horizontal strip of the complex plane. We further obtain a convergent expansion of the generalized hypergeometric function ${}_{p}F_p$ in terms of the confluent hypergeometric functions ${}_{1}F_1$ that holds uniformly in any right half-plane. For both functions, we make a further step and give convergent expansions in terms of trigonometric, exponential and rational functions that hold uniformly in the same domains. For all four expansions we present explicit error bounds. The accuracy of the approximations is illustrated with some numerical experiments.
\vspace{1cm}
		
\noindent \textsf{2010 AMS \textit{Mathematics Subject Classification:} 33C20; 41A58; 41A80.} \\\\
\noindent  \textsf{Keywords \& Phrases:} generalized hypergeometric function; Bessel function; Kummer function; convergent expansions; uniform expansions.
		\\\\
\end{abstract}

\section{Introduction}

A variety of expansions (convergent or asymptotic) of the special functions of mathematical physics can be found in the literature.
These expansions have the important property of being given in terms of elementary functions: mostly, positive or negative powers of a certain variable $z$ and, sometimes, other elementary functions. However, very often, these expansions are not simultaneously valid for small and large values of $\vert z\vert$. Thus, it would be interesting to derive new convergent expansions in terms of elementary functions that hold uniformly in $z$ in a large region of the complex plane containing both small and large values of $\vert z\vert$.

In \cite{blanca}, \cite{confluent} and \cite{Lopez}, the authors derived new uniform convergent expansions of the incomplete gamma functions, the Bessel functions and the confluent hypergeometric functions, respectively, in terms of elementary functions. The starting point of the technique used in \cite{blanca}, \cite{confluent} and \cite{Lopez} is an appropriate integral representation of these functions. The key idea is the use of the Taylor expansion of a certain factor of the integrand that is independent of the variable $z$, at an appropriate point of the integration interval, and subsequent interchange of sum and integral. The independence of that factor of $z$ translates into  uniform convergence of the resulting expansion in a large region of the complex $z-$plane. The expansions given in \cite{blanca}, \cite{confluent} and \cite{Lopez} are accompanied by error bounds and numerical experiments showing the accuracy of the approximations.

In this work, we continue that line of investigation by considering the generalized hypergeometric functions ${}_{p-1}F_p(\aa;\bb;z)$ and ${}_{p}F_p(\aa;\bb;z)$. We view them as functions of the complex variable $z$, and derive new convergent expansions uniformly valid in an unbounded region of the complex $z-$plane that contains the point $z=0$. The generalized hypergeometric function (GHF) ${}_{q}F_p(\aa;\bb;z)$ is defined by means of the hypergeometric series as
(see \cite[Section~2.1]{AAR}, \cite[Section~5.1]{LukeBook}, \cite[Chapter~12]{BealsWong} or \cite[eq. (16.2.1)]{NIST})
\begin{equation}\label{qfp}
\left.{}_{q}F_p\left(\begin{matrix}\aa\\\bb\end{matrix}\:\right\vert z\right)=\sum_{k=0}^{\infty}\frac{\left(a_{1}\right)_{k}\cdots\left(a_{q%
	}\right)_{k}}{\left(b_{1}\right)_{k}\cdots\left(b_{p}\right)_{k}}\frac{z^{k}}{%
	k!},
\end{equation}
where $\aa:=(a_1,a_2,\cdots,a_q)$ and $\bb:=(b_1,b_2,\cdots,b_p)$, $-b_j\notin\mathbb{N}\cup\{0\}$, are parameter vectors and $(a)_n=\Gamma(a+n)/\Gamma(a)$ is the Pochhammer's symbol. In general, ${}_{q}F_p(\aa;\bb;z)$ does not exist when some $b_k=0,-1,-2,\ldots$. Series  \eqref{qfp} converges $\forall z\in\C$ if $q\leq p$ and inside the unit disk if $q=p+1$. In the latter case, the generalized hypergeometric function ${}_{p+1}F_p(\aa;\bb;z)$ is defined outside the unit disk by analytic continuation to the cut plane  $\mathbb{C}\setminus[1,\infty)$ and the branch defined in this way in the sector $|\arg(1-z)|<\pi$, is called the principal branch (or principal value) of ${}_{p+1}F_p(\aa;\bb;z)$.

In the remaining part of this paper we only consider $q=p-1$ or $q=p$ and $\Re a_k>0$, $k=1,2,3,...,q$.
In the case $q=p-1$ we assume that $\Re[(b_1+b_2+...+b_p)-(a_1+a_2+...+a_{p-1})]>1/2$. In this case our starting point is the integral representation of ${}_{p-1}F_p(\textbf{a};\textbf{b};z)$ originally derived by Kiryakova \cite[Chapter~4]{KiryakovaBook} and further discussed in \cite[eq.(12)]{dimitri} and \cite[eq.(28)]{KLJAT2017} which, when combined with the shifting property (\ref{eq:Gtimespower}), takes the form:
\begin{equation}\label{ini}
\left.{}_{p-1}F_p\left(\begin{matrix}\aa\\ \bb\end{matrix}\:\right\vert -{z^2\over 4}\right)=
{2\Gamma(\bb)\over\sqrt{\pi}\,\Gamma(\aa)}\int_0^1\cos(zt)\,
G_{p,p}^{p,0}\left(t^2\:\vline~\begin{matrix}\bb-1/2\\\aa-1/2,0\end{matrix}\right)\!dt,
\hskip 1cm z\in\mathbb{C},
\end{equation}
where $G_{p,p}^{p,0}$ is a particular case of Meijer's $G$ function defined and further explained in (\ref{eq:G-defined}) below.
If $q=p$ we assume that $\Re[(b_1+b_2+...+b_p)-(a_1+a_2+...+a_p)]>0$. In this case our starting point is the integral representation of ${}_pF_p(\textbf{a};\textbf{b};z)$ derived in \cite[Chapter~4]{KiryakovaBook} and  further  discussed in \cite[eq.(11)]{dimitri},
\begin{equation}\label{inidos}
\left.{}_{p}F_p\left(\begin{matrix}\aa\\ \bb\end{matrix}\:\right\vert -z\right)=
{\Gamma(\bb)\over\,\Gamma(\aa)}\int_0^1e^{-zt}\,
G_{p,p}^{p,0}\left(t\:\vline~\begin{matrix}\bb-1\\ \aa-1\end{matrix}\right)\!dt,
\hskip 1cm z\in\mathbb{C}.
\end{equation}
If the above restrictions on parameters are violated the results of this paper can still be applied by employing the decomposition \cite[eq.(31)]{KLJAT2017}
\begin{equation}\label{eq:pFq-decompose}
{}_{q}F_p\left.\left(\begin{matrix}\aa\\\bb\end{matrix}\:\right\vert z\right)=
\sum_{k=0}^{n-1}\frac{(\aa)_k}{(\bb)_kk!}z^k
+\frac{(\aa)_nz^{n}}{(\bb)_nn!}{}_{q+1}F_{p+1}\left.\left(\begin{matrix}\aa+n,1\\\bb+n,n+1\end{matrix}\right\vert z\right).
\end{equation}
Indeed, we can always choose $n$ large enough to satisfy $\Re(\aa+n)>0$ and $\Re[(b_1+b_2+...+b_p+np+n+1)-(a_1+a_2+...+a_q+nq+1)]>0$.

The power series expansion \eqref{qfp} may be obtained from \eqref{ini} or \eqref{inidos} by replacing the factor $\cos(zt)$ or $e^{-zt}$ by its Taylor series at the origin, interchanging series and integral and using the following formula for the moments of the $G_{p,p}^{p,0}$ function \cite[eq.(16)]{KLJAT2017},
$$
\int_0^1 t^m G_{p,p}^{p,0}\left(t\:\vline~\begin{matrix}\bb\\\aa\end{matrix}\right)\!dt=\frac{\Gamma(\aa+m+1)}{\Gamma(\bb+m+1)}, \hskip 1cm m\in\N_0,
$$
The Taylor expansions for $\cos(zt)$ and $e^{-zt}$ converge for $t\in[0,1]$, but the convergence is not uniform in $|z|$. Therefore, expansion \eqref{qfp} is convergent, but not uniformly in $|z|$ as the remainder is unbounded for large $|z|$.

The asymptotic expansions of ${}_{p-1}F_p(\aa;\bb;z)$ and ${}_pF_p(\aa;\bb;z)$ for large $\vert z\vert$ can be found in \cite[Sec. 16.11]{NIST}. They are given in terms of formal series expansions in inverse powers of $z$ and are asymptotic for large $\vert z\vert$, but the remainders are unbounded for small $|z|$ and then, the expansions are not uniform in $|z|$.

As an illustration of the uniform approximations that we are going to obtain in this paper (see Theorems~\ref{th:GHFBesselBessel}-\ref{th:GHFKummerelementary} below), we derive, for example, the following one:
\begin{equation}\label{ejemplo}
{}_1F_2\left(3;\frac{7}{2},5;-\frac{z^2}{4}\right)\simeq\;\frac{720z(8z^4+105z^2-1890)}{z^{11}}\cos z
+\frac{720(z^6-15z^4-735z^2+1890)}{z^{11}}\sin z,
\end{equation}
that approximates the left hand side in any horizontal strip of the complex $z-$plane. Note that the limit of the right hand side of \eqref{ejemplo} as $z\to0$ is finite and equals $208/231$. Figure 1 illustrates the accuracy and the uniform character of the above approximation for real $z$.

\begin{figure}[h!]
  \centering
   \includegraphics[width=6cm]{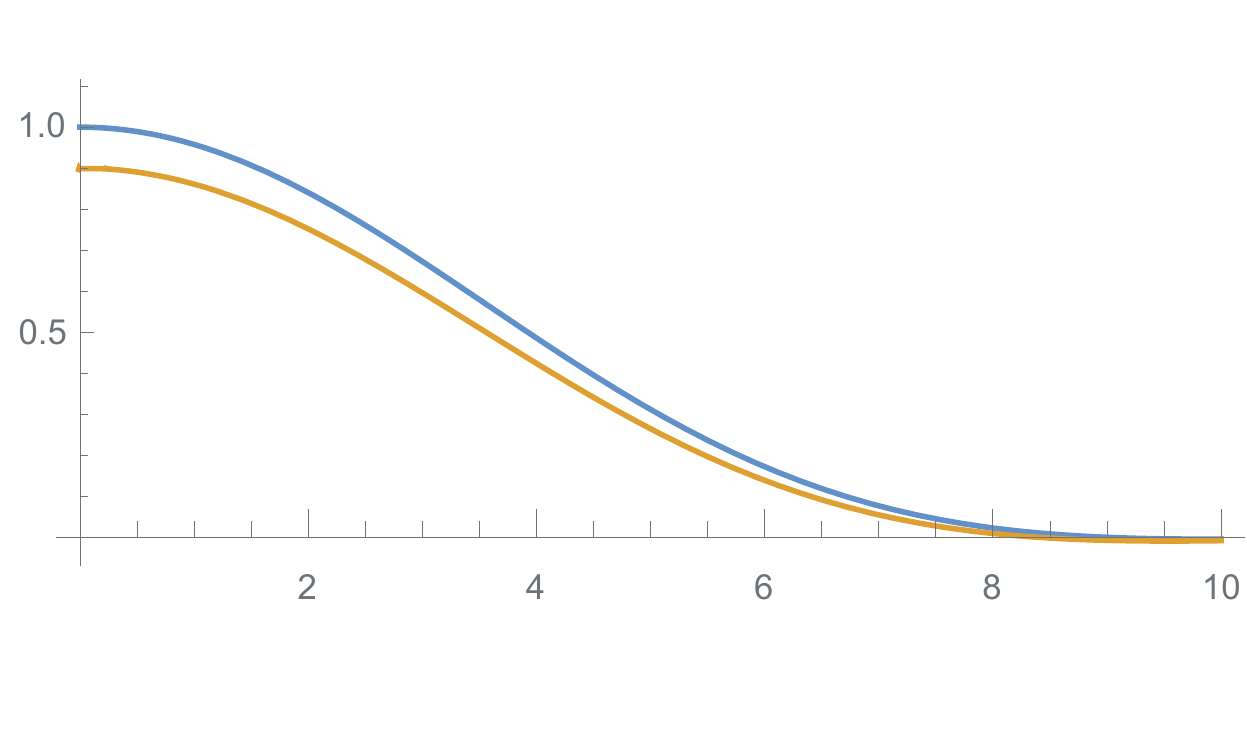}
\hskip 2cm
  \includegraphics[width=6cm]{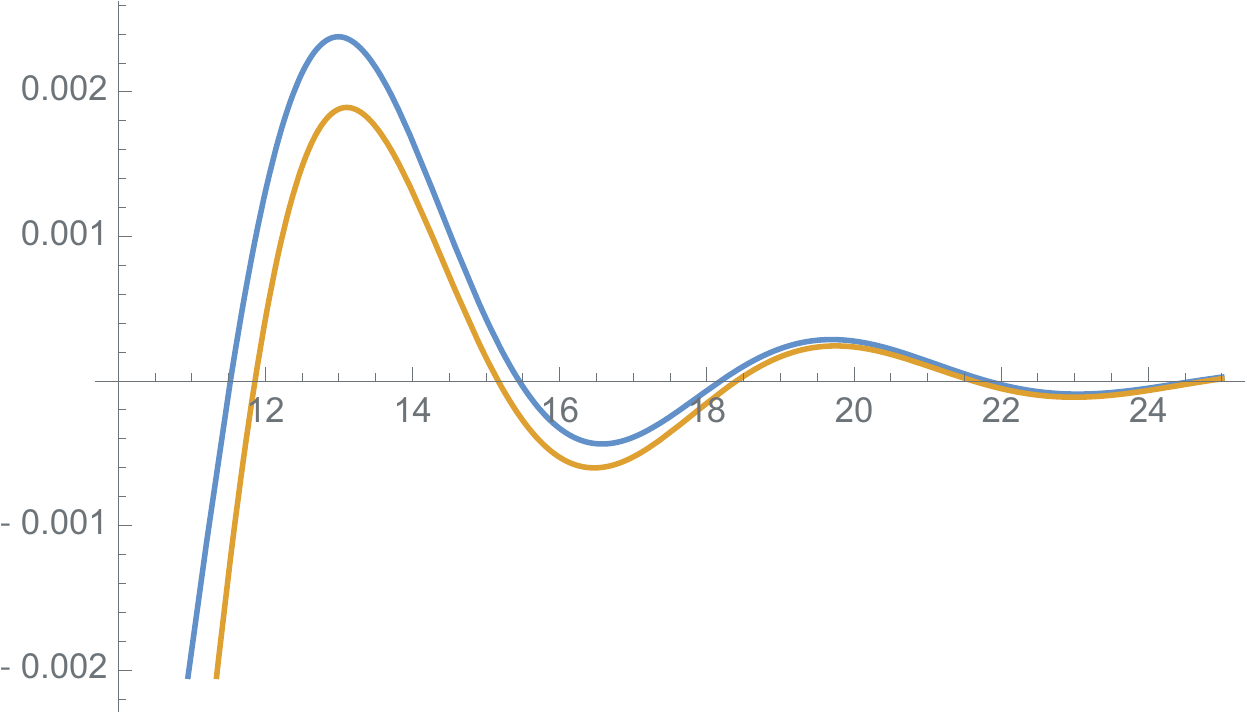}
  \caption{\small Plot of the left (blue) and right (yellow) hand side of \eqref{ejemplo} in two intervals of the real $z-$axis. }\label{figure1}
\end{figure}
\noindent

In order to derive uniformly convergent expansions of ${}_{p-1}F_p(\aa;\bb;z)$ and ${}_pF_p(\aa;\bb;z)$, we apply the technique proposed in \cite{blanca}, \cite{confluent} and \cite{Lopez}: we consider the Taylor expansion of the factor $G_{p,p}^{p,0}(\aa;\bb;t)$ at $t=1$ in \eqref{ini} and \eqref{inidos}. This Taylor expansion is convergent for any $t$ in the interval of integration and, obviously, it is independent of $z$. After the interchange of the series and the integral, this independence translates into a remainder that may be bounded uniformly with respect to $z$ in a large unbounded region of the complex $z-$plane that contains the point $z=0$ and that we specify in Theorems~\ref{th:GHFBesselBessel}-\ref{th:GHFKummerelementary} below.

This paper is organized as follows. In the preliminary Section 2, some properties of the Meijer-N{\o}rlund function $G^{p,0}_{p,p}$ and N{\o}rlund's coefficients needed for later computations are presented and some notation introduced.  In  Section 3 we consider the integral representation \eqref{ini} for ${}_{p-1}F_p(\aa;\bb;z)$. In Section 4 we consider the integral representation \eqref{inidos} for ${}_pF_p(\aa;\bb;z)$.   In these two sections we first derive expansions in terms of the Bessel and the confluent hypergeometric functions, respectively. We may consider these expansion as ''natural'', as the Bessel function ${}_{0}F_1(\aa;\bb;z)$ and the confluent hypergeometric function ${}_1F_1(\aa;\bb;z)$ are the first functions of the respective $p-$hierachies. Next, using the known expansions of the Bessel and the confluent hypergeometric functions in terms of elementary functions from \cite{confluent} and \cite{Lopez}, we proceed to derive, for both ${}_{p-1}F_p(\aa;\bb;z)$ and ${}_pF_p(\aa;\bb;z)$, a second expansion in terms of elementary functions.  Throughout the paper we use the principal argument $\arg(z)\in(-\pi,\pi]$.

\section{Preliminaries on the Meijer-N{\o}rlund function and N{\o}rlund's coefficients}

We will use the standard notation $\mathbb{N}$, $\mathbb{Z}$ and $\mathbb{C}$ for the sets of natural, integer and complex numbers, respectively;  $\mathbb{N}_0=\mathbb{N}\cup\{0\}$. The size of a vector $\aa:=(a_1,a_2,...,a_p)$ is typically obvious from the subscript of the corresponding hypergeometric function.  Throughout the paper, we will use the shorthand notation for  products and sums:
$$
\Gamma(\aa):=\Gamma(a_1)\Gamma(a_2)\cdots\Gamma(a_p),~~(\aa)_n:=(a_1)_n(a_2)_n\cdots(a_p)_n,~~\textbf{a}+\mu:=(a_1+\mu,a_2+\mu,\dots,a_p+\mu);
$$
inequalities like $\Re(\aa)>0$ and properties like $-\aa\notin\mathbb{N}_0$ will be understood element-wise.  The symbol $\aa_{[k]}$ stands for the vector $\aa$ with omitted $k$-th component. Given two complex vectors $\aa\in\mathbb{C}^{q}$, $\bb\in\mathbb{C}^{p}$, we also define
\begin{equation}\label{eq:psi-defined}
\psi(\aa,\bb):=\sum_{j=1}^{p}b_j-\sum_{j=1}^{q}a_j.
\end{equation}

We will need the basic properties of a particular case of Meijer's $G$ function $G^{p,0}_{p,p}$ studied in detail by N{\o}rlund in \cite{Norlund} using a different notation and without mentioning Meijer's previous work. In \cite{KL2018IJAM} we suggested the denomination ''Meijer-N{\o}rlund function'' for this function defined by the Mellin-Barnes integral of the form
\begin{equation}\label{eq:G-defined}
G^{p,0}_{p,p}\!\left(\!z~\vline\begin{array}{l}\bb\\\aa\end{array}\!\!\right)\!\!:=
\\
\frac{1}{2\pi{i}}
\int\limits_{\mathcal{L}}\!\!\frac{\Gamma(\aa\!+\!s)}{\Gamma(\bb+\!s)}z^{-s}ds, \hskip 2cm z\in\mathbb{C}.
\end{equation}
We omit the details regarding the choice of the contour ${\mathcal{L}}$ as the definition of (the general case of) Meijer's $G$ function can be found in standard text- and reference- books \cite[section~5.2]{LukeBook}, \cite[16.17]{NIST}, \cite[8.2]{PBM3} and \cite[Chapter~12]{BealsWong}.  See also our papers \cite{KLJAT2017,KL2018IJAM,KPSIGMA}.  The following shifting property is straightforward from the definition (\ref{eq:G-defined}), but nevertheless it is very useful (see \cite[8.2.2.15]{PBM3} or \cite[Sec. 16.19, eq. (16.19.2)]{NIST}):
\begin{equation}\label{eq:Gtimespower}
z^{\alpha}G^{p,0}_{p,p}\!\left(\!z~\vline\begin{array}{l}\bb\\\aa\end{array}\!\!\right)=G^{p,0}_{p,p}\!\left(\!z~\vline\begin{array}{l}\bb+\alpha\\\aa+\alpha\end{array}\!\!\right), \hskip 2cm \alpha\in\mathbb{C}.
\end{equation}
Given two complex vectors $\aa\in\mathbb{C}^{p-1}$, $\bb\in\mathbb{C}^{p}$ and $N\in\mathbb{N}$, N{\o}rlund's coefficients $g_n(\aa;\bb)$ are defined  via the generating function \cite[eq.(1.33)]{Norlund}, \cite[eq.(11)]{KLJAT2017} which we present in a split form for further reference:

\begin{equation}\label{eq:Norlund}
G^{p,0}_{p,p}\!\left(\!1-z~\vline\begin{array}{l}\bb\\\aa,0\end{array}\!\!\right)=\frac{z^{\psi(\aa;\bb)-1}}{\Gamma(\psi(\aa,\bb))}
\sum\limits_{n=0}^{N-1}\frac{g_n(\aa;\bb)}{(\psi(\aa,\bb))_n}z^n+r_N(\aa,\bb;z),
\end{equation}
where
$$
r_N(\aa,\bb;z):=\frac{z^{\psi(\aa;\bb)-1}}{\Gamma(\psi(\aa,\bb))}\sum\limits_{n=N}^{\infty}\frac{g_n(\aa;\bb)}{(\psi(\aa,\bb))_n}z^n
$$
and $\psi(\aa,\bb)$ is defined by (\ref{eq:psi-defined}).  These coefficients are polynomials symmetric in the components of the vectors $\aa=(a_1,\ldots,a_{p-1})$ and $\bb=(b_1,\ldots,b_p)$.  They can also be defined via the inverse factorial generating function \cite[eq.(2.21)]{Norlund}
$$
\frac{\Gamma(z+\psi(\aa;\bb))\Gamma(z+\aa)}{\Gamma(z+\bb)}=\sum\limits_{n=0}^{\infty}\frac{g_n(\aa;\bb)}{(z+\psi(\aa,\bb))_n}.
$$
As, clearly, $\psi(\aa+\alpha;\bb+\alpha)=\psi(\aa;\bb)+\alpha$, we have (by changing $z\to{z+\alpha}$)
$$
\frac{\Gamma(z+\alpha+\psi(\aa;\bb))\Gamma(z+\alpha+\aa)}{\Gamma(z+\alpha+\bb)}=\sum\limits_{n=0}^{\infty}\frac{g_n(\aa+\alpha;\bb+\alpha)}{(z+\psi(\aa;\bb)+\alpha)_n}
=\sum\limits_{n=0}^{\infty}\frac{g_n(\aa;\bb)}{(z+\alpha+\psi(\aa;\bb))_n}.
$$
Hence, $g_n(\aa+\alpha;\bb+\alpha)=g_n(\aa;\bb)$ for any $\alpha$. N{\o}rlund found two different recurrence relations for $g_n$ (one in $p$ and one in $n$). The simplest of them reads \cite[eq.(2.7)]{Norlund}
\begin{equation}\label{eq:Norlundcoeff}
g_n(\aa,\alpha;\bb,\beta)=\sum\limits_{s=0}^{n}\frac{(\beta-\alpha)_{n-s}}{(n-s)!}(\psi(\aa;\bb)-\alpha+s)_{n-s}g_s(\aa;\bb),~~~p=1,2,\ldots,
\end{equation}
with the  initial values $g_0(-;b_1)=1$, $g_n(-;b_1)=0$, $n\ge1$.  This recurrence was solved by N{\o}rlund  \cite[eq.(2.11)]{Norlund} as follows:
\begin{subequations}
\begin{equation}\label{eq:Norlund-explicit}
g_n(\aa;\bb)=\sum\limits_{0\leq{j_{1}}\leq{j_{2}}\leq\cdots\leq{j_{p-2}}\leq{n}}
\prod\limits_{m=1}^{p-1}\frac{(\psi_m+j_{m-1})_{j_{m}-j_{m-1}}}{(j_{m}-j_{m-1})!}(b_{m+1}-a_{m})_{j_{m}-j_{m-1}},
\end{equation}
where $\psi_m=\sum_{i=1}^{m}(b_i-a_i)$, $j_0=0$, $j_{p-1}=n$.   This formula can be rewritten as:
\begin{equation}\label{eq:Norlund-explicit1}
g_n(\aa;\bb)=\sum\limits_{0\leq{j_{1}}\leq{j_{2}}\leq\cdots\leq{j_{p-2}}\leq{n}}
\prod\limits_{m=1}^{p-1}\frac{(-j_{m})_{j_{m-1}}(b_{m+1}-a_{m})_{j_{m}}(\psi_m)_{j_m}}{(1+a_{m}-b_{m+1})(\psi_m)_{j_{m-1}}j_{m}!},
\end{equation}
\end{subequations}
where we applied
$$
(\alpha)_{n-s}=\frac{(-1)^s(\alpha)_n}{(1-\alpha-n)_s},~~~~(n-s)!=\frac{(-1)^sn!}{(-n)_s},~~~(\alpha+s)_{n-s}=\frac{(\alpha)_n}{(\alpha)_s}.
$$
Note that the presence of the terms $(-j_{m})_{j_{m-1}}$ allows extending the above sums to $\mathbb{N}^{p-2}_{0}$ without changing their values. The other recurrence relation for $g_n(\aa;\bb)$ discovered by N{\o}rlund \cite[eq.(1.28)]{Norlund} has order $p$ in the variable $n$ and coefficients polynomial in $n$. Details can be found in \cite[section~2.2]{KPSIGMA}.  The first three coefficients  are given by (see \cite[Theorem~3.1]{KPSIGMA} for details):
$$
g_0(\aa;\bb)=1,~~~~g_1(\aa;\bb)=\sum_{m=1}^{p-1}(b_{m+1}-a_m)\psi_m,
$$
$$
g_2(\aa;\bb)=\frac{1}{2}\sum_{m=1}^{p-1}(b_{m+1}-a_m)_2(\psi_m)_2+\sum_{k=2}^{p-1}(b_{k+1}-a_{k})(\psi_{k}+1)\sum_{m=1}^{k-1}(b_{m+1}-a_m)\psi_m.
$$
For $p=2$ and $p=3$ and arbitrary $n$ explicit expressions for $g_n(\aa;\bb)$ have also been found by N{\o}rlund, see \cite[eq.(2.10)]{Norlund}. Defining $\nu_m:=\sum_{j=1}^{m}b_j-\sum_{j=1}^{m-1}a_j$, we have
\begin{equation}\label{eq:gnp2p3}
\begin{split}
&g_n(a;\bb)=\frac{(b_1-a)_{n}(b_2-a)_{n}}{n!}~~\text{for}~p=2;
\\
&g_n(\aa;\bb)=\frac{(\nu_3-b_2)_n(\nu_3-b_3)_n}{n!}
{}_3F_2\left(\!\!\begin{array}{l}-n,b_1-a_1,b_1-a_2\\\nu_3-b_2,\nu_3-b_3\end{array}\!\!\right)~~\text{for}~p=3.
\end{split}
\end{equation}
The right hand side here is invariant with respect to the permutation of the elements of $\bb$. Finally, for $p=4$ we have \cite[p.12]{KPSIGMA}
$$
g_n(\aa;\bb)=\frac{(\nu_4-b_3)_n(\nu_4-b_4)_n}{n!}
\sum\limits_{k=0}^{n}\frac{(-n)_k(\nu_2-a_{2})_k(\nu_2-a_{3})_k}{(\nu_4-b_3)_k(\nu_4-b_4)_k}
{}_{3}F_{2}\!\left(\!\begin{array}{l}-k,b_1-a_{1},b_2-a_{1}\\\nu_2-a_{2}, \nu_2-a_{3}\end{array}\!\!\right).
$$

The following lemma will play an important role in proving the convergence of the expansions considered in the sequel.
\begin{lemma}\label{lm:gnestimate}
Given two complex vectors  $\aa\in\C^{p-1}$ and $\bb\in\C^{p}$, denote by $-a$  the real part of the rightmost pole(s) of the function $s\to\Gamma(\aa+s)/\Gamma(\bb+s)$ and  write $r\in\N$ for the maximal multiplicity among all poles with the real part $-a$. Then for $n\ge2$ there exists a constant $K>0$ independent of $n$ such that
\begin{equation}\label{eq:Norlundestimate}
\left|\frac{g_n(\aa;\bb)}{\Gamma(\psi(\aa;\bb)+n)}\right|\leq\frac{K\log^{r-1}(n)}{n^{a+1}}.
\end{equation}
\end{lemma}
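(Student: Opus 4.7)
The plan is to identify the expression in \eqref{eq:Norlundestimate} with a constant multiple of the $n$-th Taylor coefficient at $z=0$ of the generating function
\begin{equation*}
\phi(z):=\sum_{n=0}^{\infty}\frac{g_n(\aa;\bb)}{(\psi(\aa;\bb))_n}z^n
=\Gamma(\psi(\aa;\bb))\,z^{1-\psi(\aa;\bb)}\,
G^{p,0}_{p,p}\!\left(1-z~\vline~\begin{array}{l}\bb\\ \aa,0\end{array}\right),
\end{equation*}
the second identity being \eqref{eq:Norlund}. Since $g_n(\aa;\bb)/\Gamma(\psi(\aa;\bb)+n)=[z^n]\phi(z)/\Gamma(\psi(\aa;\bb))$, the required estimate reduces to controlling the decay rate of the Taylor coefficients of $\phi$ at $z=0$, which in turn is governed by the nature of the singularity of $\phi$ nearest to the origin.

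To pin that singularity down, I would use the Mellin--Barnes representation \eqref{eq:G-defined} with $w=1-z$ and shift the contour $\mathcal{L}$ to the left, collecting residues of the integrand $\Gamma(\aa+s)\Gamma(s)/\Gamma(\bb+s)\cdot w^{-s}$. The poles $s=-k$, $k\in\N_0$, contributed by $\Gamma(s)$ alone are simple and give rise to integer powers $(1-z)^k$ which are entire in $z$ and hence invisible to the asymptotics of $[z^n]\phi(z)$. The decisive contribution comes from the rightmost poles of $\Gamma(\aa+s)/\Gamma(\bb+s)$, situated on the line $\Re s=-a$ with maximal multiplicity $r$: a direct residue calculation shows that they produce a singular part of the expansion of $\phi$ about $z=1$ of the form $C(1-z)^a[\log(1-z)]^{r-1}$ when $a\notin\N$, and of the form $C(1-z)^a[\log(1-z)]^{r}$ when $a\in\N$ (the extra logarithm coming from the confluence with the $\Gamma(s)$-pole at $s=-a$), plus terms of strictly lower singular order. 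Assuming $\phi$ admits analytic continuation to a pointed neighbourhood $\{|z|<1+\varepsilon\}\setminus\{1\}$ of the closed unit disk --- a statement that follows from the analyticity properties of Meijer's $G$-function recorded in \cite[Sec.~16.17]{NIST} together with $z^{1-\psi}$ being analytic for $z\ne 0$ --- Darboux's theorem, or equivalently the Flajolet--Odlyzko transfer theorem for algebraic-logarithmic singularities, yields $[z^n]\phi(z)=O(n^{-a-1}\log^{r-1}n)$ in the case $a\notin\N$. In the case $a\in\N$ the factor $(1-z)^a$ is a polynomial and the elementary identity $[z^n]\{(1-z)^a\log^r(1-z)\}=O(n^{-a-1}\log^{r-1}n)$ (one logarithm is absorbed in the coefficient extraction) recovers the same bound. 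Division by $\Gamma(\psi(\aa;\bb))$ then produces \eqref{eq:Norlundestimate}.

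The main obstacle is the careful book-keeping between the two sub-cases $a\notin\N$ and $a\in\N$: although they give rise to different local expansions at $z=1$, both must yield the same rate $n^{-a-1}\log^{r-1}n$ for the Taylor coefficients, the coincidence relying on the fact that extracting $[z^n]$ from $(1-z)^a\log^r(1-z)$ kills one log factor exactly when $a$ is a non-negative integer. A secondary, more routine, task is to verify that no other singularity of $\phi$ lies on the circle $|z|=1$, so that the transfer theorem applies with $z=1$ as the sole dominant singularity.
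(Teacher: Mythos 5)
Your core strategy is the same as the paper's: identify $g_n(\aa;\bb)/\Gamma(\psi(\aa;\bb)+n)$ with the Taylor coefficients of $w\mapsto w^{1-\psi(\aa;\bb)}G^{p,0}_{p,p}(1-w\,|\,\bb;\aa,0)$ and deduce their decay from the singularity of this function at $w=1$ via Flajolet--Odlyzko. Where you genuinely diverge is the treatment of the case $a\ge0$. The paper never computes the confluent local expansion: for $a<0$ it quotes the small-argument behaviour of the Meijer--N{\o}rlund function, $f(w)=\mathcal{O}\bigl((1-w)^a\log^{r-1}(1-w)\bigr)$, together with analyticity in an indented disk, and applies the $\mathcal{O}$-transfer theorem; for $a\ge0$ it uses the shift invariance $g_n(\aa-\beta;\bb-\beta)=g_n(\aa;\bb)$ and $\psi(\aa-\beta;\bb-\beta)=\psi(\aa;\bb)-\beta$ to reduce to the case $a'=a-\beta<0$, paying only a ratio of Gamma functions that is $\mathcal{O}(n^{-\beta})$. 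You instead attack $a\ge0$ head-on, subtracting the analytic part generated by the poles of $\Gamma(s)$ and tracking possible confluence; your observation that $[z^n](1-z)^a\log^{r}(1-z)=\mathcal{O}(n^{-a-1}\log^{r-1}n)$ for integer $a\ge0$ is correct and is exactly what saves the bound in the confluent situation. This route is viable, but it takes on precisely the technical work the paper's shift trick is designed to avoid.

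Three points in your sketch need repair. First, the analyticity claim is wrong as stated: $z=1$ is in general a branch point of $\phi$, with a cut emanating from it, so $\phi$ is not analytic in $\{|z|<1+\varepsilon\}\setminus\{1\}$; what is true, and what the transfer theorem actually requires, is analyticity in an indented domain $\{|z|<1+\eta,\ |\arg(1-z)|\ge\varphi_0\}$, which is what the paper invokes (and Darboux, which needs smoothness up to the circle, is not interchangeable with it here). Second, for $a\ge0$ a mere $\mathcal{O}$-estimate of $\phi$ near $z=1$ is useless (the function is bounded there), so the transfer must be applied to $\phi$ minus a finite truncation of its local expansion; this requires the remainder of the shifted Mellin--Barnes integral to be $\mathcal{O}(|1-z|^{M})$ \emph{uniformly} in the indented domain, with $M>a$. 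You assert the local expansion ``plus terms of strictly lower singular order'' but never address this uniformity, which is the technical heart of your variant. Third, the dichotomy ``$a\in\N$ versus $a\notin\N$'' is not the correct criterion for confluence: an extra logarithm appears only when a rightmost pole of $\Gamma(\aa+s)/\Gamma(\bb+s)$ sits exactly at a non-positive integer (in particular it must be real; rightmost poles at $-a\pm i\tau$ with $a\in\N$ produce no confluence), and the extra power of $\log$ is governed by the multiplicity of that particular pole, which may be smaller than $r$. The final estimate $\mathcal{O}(n^{-a-1}\log^{r-1}n)$ does survive in every configuration, but the bookkeeping as written is inaccurate and should be reorganised around the location and multiplicity of the individual rightmost poles rather than around whether $a$ is an integer.
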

\begin{proof} Assume first that $a<0$. This implies that the rightmost pole(s) of the function $s\to\Gamma(\aa+s)/\Gamma(\bb+s)$ coincide with
the rightmost pole(s) of the function $s\to\Gamma(s)\Gamma(\aa+s)/\Gamma(\bb+s)$. Define
$$
f(w):=w^{1-\psi(\aa;\bb)}G^{p,0}_{p,p}\!\left(1-w~\vline\begin{array}{l}\bb\\\aa,0\end{array}\right)
=\sum\limits_{n=0}^{\infty}\frac{g_n(\aa;\bb)}{\Gamma(\psi(\aa;\bb)+n)}w^n
$$
by (\ref{eq:Norlund}). It follows from the properties of the Meijer-N{\o}rlund function $G^{p,0}_{p,p}$ that $f(w)$ is  analytic in the domain
$$
\Delta(\phi,\eta):=\{w:|w|<1+\eta,|\arg(1-w)|\ge\phi\}
$$
for some $\eta>0$ and $0<\phi<\pi/2$.  Further, from the asymptotic properties of $G^{p,0}_{p,p}(z)$ in the neighborhood of $z=0$ given in \cite[Property~5]{KLJAT2017} we conclude that
$$
f(w)=\mathcal{O}\left((1-w)^{a}\log^{r-1}(1-w)\right)~~~\text{as}~w\to1,
$$
where $a$ and $r$ are as defined in the Lemma. Hence, we are in position to apply \cite[Theorem~2]{FO} stating that
$$
\left|\frac{g_n(\aa;\bb)}{\Gamma(\psi(\aa;\bb)+n)}\right|={\mathcal O}\left(n^{-a-1}\log^{r-1}(n)\right)~\text{as}~n\to\infty.
$$
Next, assume that $a\ge0$.  Take $\beta>0$ large enough to make the real part of the rightmost pole(s) of the function $s\to\Gamma(\aa-\beta+s)/\Gamma(\bb-\beta+s)$ positive and denote this real part $-a'$, so that $a'<0$ and we are in the situation treated above.  Hence, using $a'=a-\beta$, $g_n(\aa-\beta;\bb-\beta)=g_n(\aa;\bb)$ and $\psi(\aa-\beta;\bb-\beta)=\psi(\aa;\bb)-\beta$ we get
\begin{multline*}
\left|\frac{g_n(\aa;\bb)}{\Gamma(\psi(\aa;\bb)+n)}\right|=\left|\frac{g_n(\aa-\beta;\bb-\beta)\Gamma(\psi(\aa-\beta;\bb-\beta)+n)}{\Gamma(\psi(\aa-\beta;\bb-\beta)+n)\Gamma(\psi(\aa;\bb)+n)}\right|
\\
\leq\frac{K_1\log^{r-1}(n)}{n^{a'+1}}\left|\frac{\Gamma(\psi(\aa;\bb)-\beta+n)}{\Gamma(\psi(\aa;\bb)+n)}\right|
\leq\frac{K_1\log^{r-1}(n)}{n^{a-\beta+1}}K_2n^{-\beta}=\frac{K_1K_2\log^{r-1}(n)}{n^{a+1}}.
\end{multline*}
\end{proof}

\noindent\textbf{Remark.} In some situations below we will need an extended version of inequality (\ref{eq:Norlundestimate}) valid for all $n\ge0$.
It is straightforward to see that (\ref{eq:Norlundestimate}) implies that the inequality
\begin{equation}\label{eq:Norlundestimate0}
\left|\frac{g_n(\aa;\bb)}{\Gamma(\psi(\aa;\bb)+n+\mu)}\right|\leq C\frac{\log^{r-1}(n+2)}{(n+1)^{a+1+\mu}}
\end{equation}
is true with some positive constant $C$ for all $n\ge0$ and any $\mu\in\R$.

\begin{lemma}\label{lm:remainder}
Suppose $a$ and $r$ retain their meaning from Lemma~\ref{lm:gnestimate} and assume further that $a>0$.  Then for $x\in[0,1]$ the remainder $r_N(\aa,\bb;x)$ in formula \eqref{eq:Norlund} satisfies
\begin{equation}\label{eq:rNestimate}
\left| r_N(\aa,\bb;x)\right|\leq\frac{K\log^{r-1}(N)}{N^{a}}x^{N+\psi(\aa;\bb)-1}
\end{equation}
for some constant $K>0$ independent of $N$ and $x$.
\end{lemma}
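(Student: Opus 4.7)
The strategy is to bound the series defining $r_N(\aa,\bb;x)$ term by term using the extended N{\o}rlund coefficient estimate (\ref{eq:Norlundestimate0}), exploit the monotonicity of $x^n$ for $x\in[0,1]$ to pull out a factor $x^N$, and finally bound the resulting tail of a convergent series of nonnegative reals.

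First, using $(\psi(\aa,\bb))_n=\Gamma(\psi(\aa,\bb)+n)/\Gamma(\psi(\aa,\bb))$, I would rewrite the definition of $r_N$ as
$$
r_N(\aa,\bb;x)=x^{\psi(\aa,\bb)-1}\sum_{n=N}^{\infty}\frac{g_n(\aa;\bb)}{\Gamma(\psi(\aa,\bb)+n)}\,x^n,
$$
which is precisely the form to which the extended estimate (\ref{eq:Norlundestimate0}) (with $\mu=0$) applies. Taking moduli and using $|x^{\psi-1}|=x^{\Re\psi-1}$ for $x>0$ yields
$$
|r_N(\aa,\bb;x)|\leq C\,x^{\Re\psi(\aa,\bb)-1}\sum_{n=N}^{\infty}\frac{\log^{r-1}(n+2)}{(n+1)^{a+1}}\,x^n,
$$
for some $C>0$ independent of $N$ and $x$.

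Second, I would invoke the elementary inequality $x^n=x^Nx^{n-N}\leq x^N$, valid for $x\in[0,1]$ and $n\geq N$, to factor $x^N$ out of the series. This reduces the problem to estimating the pure tail
$$
S(N):=\sum_{n=N}^{\infty}\frac{\log^{r-1}(n+2)}{(n+1)^{a+1}}.
$$

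The main technical step is then to show $S(N)=O(\log^{r-1}(N)/N^a)$, and it is here that the assumption $a>0$ is essential, since it is precisely what makes the tail of the series decay at the required polynomial rate. I would treat this by integral comparison, bounding $S(N)$ by $\int_{N-1}^{\infty}\log^{r-1}(t+2)(t+1)^{-a-1}\,dt$, and then applying integration by parts $r-1$ times, each step reducing the power of the logarithm by one while preserving the $t^{-a}$ decay. Combining the resulting bound for $S(N)$ with the two previous steps gives exactly (\ref{eq:rNestimate}). I do not anticipate a genuine obstacle; the only mildly delicate point is the bookkeeping of the constants through the iterated integration by parts when $r>1$, which is routine.
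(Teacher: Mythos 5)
Your proposal is correct and follows essentially the same route as the paper: a termwise bound on the coefficients $g_n(\aa;\bb)/\Gamma(\psi(\aa;\bb)+n)$, the factorization $x^{n}\le x^{N}$ for $x\in[0,1]$, and the key tail estimate $\sum_{n\ge N}\log^{r-1}(n)\,n^{-a-1}=\mathcal{O}\bigl(\log^{r-1}(N)/N^{a}\bigr)$, which is exactly where $a>0$ enters. The only difference is in how that tail sum is bounded: the paper writes it as the $(r-1)$-st derivative in $a$ of the Hurwitz zeta function $\zeta(a+1,N)$ and estimates its integral representation, whereas you use an integral comparison followed by iterated integration by parts -- a slightly more elementary but equally valid treatment of the same sub-step.
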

\begin{proof}
For $x\in[0,1]$, and using the previous lemma we have
$$
\left| r_N(\aa,\bb;x)\right|\leq Kx^{N+\psi(\aa;\bb)-1}s_N(a,r), \hskip 5mm s_N(a,r):=\sum_{n=N}^\infty{\log^{r-1}n\over n^{a+1}}=(-1)^{r-1}{d^{r-1}\over da^{r-1}}\sum_{n=N}^\infty{1\over n^{a+1}}.
$$
Then
\begin{multline*}
s_N(a,r)=(-1)^{r-1}{d^{r-1}\over da^{r-1}}\zeta(a+1,N)
\\
={d^{r-1}\over da^{r-1}}{(-1)^{r-1}\over\Gamma(a+1)}\int_0^\infty{t^ae^{-Nt}\over 1-e^{-t}}dt\le\sum_{k=0}^{r-1} c_k(a,r)\int_0^\infty{t^a\vert \log^k t\vert e^{-Nt}\over 1-e^{-t}}dt,
\end{multline*}
where $\zeta(a+1,N)$ is the Hurwitz zeta function \cite[section~25.11]{NIST}, whose integral representation  \cite[eq.(25.11.25)]{NIST} was used in the second equality.  Here the constants $c_k(a,r)>0$ are independent of $N$. Further,
\begin{multline*}
\int_0^\infty{t^a\vert\log^k t\vert e^{-Nt}\over 1-e^{-t}}dt\le \int_0^\infty t^{a-1}(1+t)\vert\log^k t\vert e^{-Nt}dt
\\
={1\over N^{a}}\int_0^\infty t^{a-1}\left(1+\frac{t}{N}\right)\vert\log^k\left(\frac{t}{N}\right)\vert e^{-t}dt.
\end{multline*}
The term $\vert\log^k(t/N)\vert$ is bounded by a sum of terms $\vert\log^p t\log^q N\vert$, with $p+q=k$, the highest one corresponding to $q=k=r-1$:
$$
{\log^{r-1} N\over N^{a}}\int_0^\infty t^{a-1}\left(1+\frac{t}{N}\right) e^{-t}dt={\mathcal O}\left({\log^{r-1} N\over N^{a}}\right).
$$
\end{proof}

\section{Expansions for the Bessel type GHF}

\subsection{An expansion in terms of Bessel functions}

\begin{theorem}\label{th:GHFBesselBessel}
For $\aa\in\C^{p-1}$ with $\Re\aa>0$, $\bb\in\C^{p}$ and $z\in\C$, let $a$ and $r$ be the constants defined in Lemma~\ref{lm:gnestimate}.
Then, if $\Re\psi(\aa;\bb)>1/2$, for any $N\in\N$ we have
\begin{equation}\label{expJtwo}
{}_{p-1}F_p\left.\left(\begin{matrix}\aa\\\bb\end{matrix}\:\right\vert -\frac{z^2}{4}\right)
=\frac{\Gamma(\bb)}{\Gamma(\aa)}\sum\limits_{n=0}^{N-1}g_n(\aa;\bb)\frac{J_{\psi(\aa;\bb)+n-1}(z)}{(z/2)^{\psi(\aa;\bb)+n-1}}+R_N(z),
\end{equation}
where
\begin{equation}\label{bound1}
\vert R_N(z)\vert\le Ke^{\vert\Im z\vert}\frac{\log^{r-1}(N)}{N^{a+1/2}}
\end{equation}
with $K>0$ independent of $N$ and $z$. Therefore, \eqref{expJtwo} converges uniformly with respect to $z$ in any horizontal strip $\vert\Im z\vert<\Lambda$ with arbitrary $\Lambda>0$.
\end{theorem}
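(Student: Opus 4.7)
The plan is to substitute N{\o}rlund's expansion (\ref{eq:Norlund}) into the integral representation (\ref{ini}) and then reduce each resulting integral to a Bessel function via Poisson's formula. Set $\tilde{\aa}:=\aa-1/2$ and $\tilde{\bb}:=\bb-1/2$; then the shift-invariance noted just after (\ref{eq:Norlund}) gives $g_n(\tilde{\aa};\tilde{\bb})=g_n(\aa;\bb)$, and a short computation yields $\psi(\tilde{\aa};\tilde{\bb})=\psi(\aa;\bb)-1/2>0$ by the hypothesis $\Re\psi>1/2$ (writing $\psi:=\psi(\aa;\bb)$). Applying (\ref{eq:Norlund}) with $z=1-t^2$ to the Meijer-N{\o}rlund factor in the integrand of (\ref{ini}) expresses it, modulo the remainder $r_N(\tilde{\aa},\tilde{\bb};1-t^2)$, as a sum whose $n$-th term equals $g_n(\aa;\bb)(1-t^2)^{n+\psi-3/2}/\Gamma(\psi+n-1/2)$ (we used $\Gamma(\psi-1/2)(\psi-1/2)_n=\Gamma(\psi-1/2+n)$).

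Inserting this decomposition into (\ref{ini}) and exchanging sum and integral --- legitimate by dominated convergence since $\Re\psi>1/2$ renders $(1-t^2)^{\Re\psi-3/2}$ integrable at $t=1$ and (\ref{eq:Norlundestimate0}) furnishes an integrable tail majorant independent of the truncation index --- each term is evaluated by Poisson's formula
\[
\int_0^1\cos(zt)(1-t^2)^{\nu-1/2}\,dt=\frac{\sqrt{\pi}\,\Gamma(\nu+1/2)}{2(z/2)^\nu}\,J_\nu(z),\qquad \Re\nu>-1/2,
\]
applied at $\nu=\psi+n-1$. The factors $\sqrt{\pi}$ and $\Gamma(\psi+n-1/2)$ cancel against the denominator of the expansion and against the prefactor $2\Gamma(\bb)/[\sqrt{\pi}\,\Gamma(\aa)]$ in (\ref{ini}), so what survives is exactly the main sum in (\ref{expJtwo}). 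The same manipulation represents the remainder in two equivalent forms: the integral $R_N(z)=[2\Gamma(\bb)/(\sqrt{\pi}\,\Gamma(\aa))]\int_0^1\cos(zt)r_N(\tilde{\aa},\tilde{\bb};1-t^2)\,dt$ and the series $R_N(z)=(\Gamma(\bb)/\Gamma(\aa))\sum_{n\geq N}g_n(\aa;\bb)J_{\psi+n-1}(z)/(z/2)^{\psi+n-1}$.

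For the error estimate (\ref{bound1}) I would choose the form best suited to the size of $a$. When $a>1/2$, apply Lemma~\ref{lm:remainder} to the shifted parameters (for which the lemma's constant is $a-1/2$) to obtain $|r_N(\tilde{\aa},\tilde{\bb};1-t^2)|\leq K\log^{r-1}(N)(1-t^2)^{N+\psi-3/2}/N^{a-1/2}$, then bound $|\cos(zt)|\leq e^{|\Im z|}$ and use the beta integral $\int_0^1(1-t^2)^{N+\Re\psi-3/2}dt=\sqrt{\pi}\,\Gamma(N+\Re\psi-1/2)/[2\Gamma(N+\Re\psi)]=O(N^{-1/2})$. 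When $a\leq 1/2$, switch to the series form of $R_N$ and bound each term by combining the Poisson estimate $|J_\nu(z)/(z/2)^\nu|\leq e^{|\Im z|}/|\Gamma(\nu+1)|$ (which applies since $\Re(\psi+n-1)>-1/2$ for all $n\geq0$) with the extended N{\o}rlund bound (\ref{eq:Norlundestimate0}) on $g_n(\aa;\bb)/\Gamma(\psi+n)$, and summing. In either case the $z$-dependence of the resulting bound is contained entirely in $e^{|\Im z|}\leq e^{\Lambda}$, so uniform convergence on any strip $|\Im z|\leq\Lambda$ is immediate.

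The main obstacles are twofold. First, a clean justification of the termwise integration at the endpoint $t=1$, where the Meijer-N{\o}rlund factor has a mild integrable singularity; the dominating majorant is supplied by (\ref{eq:Norlundestimate0}) together with the condition $\Re\psi>1/2$, but the argument must be stated carefully since the partial sums of (\ref{eq:Norlund}) do not converge uniformly up to $t=1$. Second, extracting the sharp exponent $N^{a+1/2}$ in (\ref{bound1}): naively combining Lemma~\ref{lm:remainder} (which produces $N^{-(a-1/2)}$) with the beta-integral gain $N^{-1/2}$ only yields $N^{-a}$, so the additional half-power must come from a finer accounting --- most likely by using the sharper gamma-ratio $\Gamma(\psi+n-1/2)/\Gamma(\psi+n)\sim n^{-1/2}$ together with (\ref{eq:Norlundestimate0}) applied with shift parameter $\mu=1/2$ in the series form of $R_N$. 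This is the step where I expect the proof to require the most care.
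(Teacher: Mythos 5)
Your construction of the expansion is the paper's own: N{\o}rlund's expansion \eqref{eq:Norlund} applied with the shifted vectors $\aa-1/2$, $\bb-1/2$ and argument $1-t^2$, term-wise integration against $\cos(zt)$, and Poisson's integral at $\nu=\psi+n-1$ (write $\psi:=\psi(\aa;\bb)$); your shift bookkeeping ($g_n$ invariant, $\psi\mapsto\psi-1/2$) and your two equivalent forms of $R_N$ coincide with what the paper does, so \eqref{expJtwo} and the uniform-convergence conclusion are in order.

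The half power is where you and the paper part ways, and your accounting, not the paper's, is the correct one. The paper writes the remainder under the integral as $r_N(\aa,\bb;1-u^2)$ and applies Lemma~\ref{lm:remainder} with the \emph{unshifted} parameters, obtaining $N^{-a}(1-u^2)^{N+\Re\psi-1}$ and hence $N^{-a-1/2}$ after the beta integral; but, exactly as you observe, the substitution actually produces $r_N(\aa-1/2,\bb-1/2;1-u^2)$, for which the constant in Lemma~\ref{lm:remainder} is $a-1/2$ and the exponent is $N+\Re\psi-3/2$, so this route gives only $N^{-a}$. The ``finer accounting'' you hope for cannot exist: in the series form each tail term is bounded by (and at $z=0$ comparable to) $C e^{|\Im z|}\,|g_n(\aa;\bb)|/\Gamma(\Re\psi+n)$, i.e.\ of size $n^{-a-1}\log^{r-1}n$ --- the gain $\Gamma(\psi+n-1/2)/\Gamma(\psi+n)\sim n^{-1/2}$ is already consumed in producing this termwise size --- and these sum to $N^{-a}\log^{r-1}N$. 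That this is sharp is visible at $z=0$, where $R_N(0)=\frac{\Gamma(\bb)}{\Gamma(\aa)}\sum_{n\ge N}g_n(\aa;\bb)/\Gamma(\psi+n)$: already for $p=2$ with real $0<a_1<b_1,b_2$ (for instance the paper's illustration $a_1=3$, $\bb=(7/2,5)$, where $a=3$, $r=1$) all terms are positive and $g_n/\Gamma(\psi+n)\sim n^{-a-1}/[\Gamma(b_1-a_1)\Gamma(b_2-a_1)]$, so $R_N(0)\asymp N^{-a}$, which is not $O(N^{-a-1/2})$. In short, \eqref{bound1} as printed overstates the decay by $N^{1/2}$ because of the unshifted-parameter slip in the paper's proof; the bound your argument delivers, $|R_N(z)|\le K e^{|\Im z|}\log^{r-1}(N)\,N^{-a}$, is the correct outcome of this method and still yields uniform convergence on every horizontal strip since $a>0$. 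Two further remarks: your series-form case is needed anyway when $0<a\le 1/2$, where Lemma~\ref{lm:remainder} applied to the shifted vectors is unavailable (its constant must be positive), a case the paper silently skips; and for complex order your Bessel estimate should read $|J_\nu(z)/(z/2)^\nu|\le Ce^{|\Im z|}/\Gamma(\Re\nu+1)$, the bounded gamma ratio being controlled, e.g., as in the proof of Theorem~\ref{th:pFpKummer}.
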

\begin{proof}
Substituting the expansion (\ref{eq:Norlund}) into formula (\ref{ini}) and integrating term-wise we get
\begin{equation}\label{expJone}
\begin{split}
{}_{p-1}F_p\left.\left(\begin{matrix}\aa\\\bb\end{matrix}\:\right\vert -\frac{z^2}{4}\right)
&=\frac{\Gamma(\bb)}{\Gamma(\aa)}\sum\limits_{n=0}^{N-1}\frac{g_n(\aa;\bb)}{\Gamma(\psi(\aa;\bb)+n)}{}_0F_{1}\left(-;\psi(\aa;\bb)+n;-\frac{z^2}{4}\right)
+R_N(z),
\\
R_N(z)&:= \frac{2\Gamma(\bb)}{\sqrt{\pi}\Gamma(\aa)}\int_0^1\cos(zu)r_N(\aa,\bb;1-u^2)\,du,
\end{split}
\end{equation}
where we used the shifting property $g_n(\aa-1/2;\bb-1/2)=g_n(\aa;\bb)$ and the integral evaluation
$$
\int_0^1(1-u^2)^{\psi(\aa;\bb)+n-3/2}\cos(zu)du=\frac{\sqrt{\pi}\Gamma(\psi(\aa;\bb)+n-1/2)}{2\Gamma(\psi(\aa;\bb)+n)}{}_0F_{1}\left(-;\psi(\aa;\bb)+n;-\frac{z^2}{4}\right),
$$
valid for $n\in\N_0$ when $\Re\psi(\aa;\bb)>1/2$.  Expansion \eqref{expJone} can be rewritten in the form \eqref{expJtwo}, in terms of Bessel functions, in view of
$$
J_{\nu}(z)=\frac{(z/2)^{\nu}}{\Gamma(\nu+1)}{}_0F_{1}\left(-;\nu+1;-\frac{z^2}{4}\right).
$$
From Lemma~\ref{lm:remainder} we have that
$$
\vert R_N(z)\vert\le C\,e^{\vert\Im z\vert}\frac{\log^{r-1}(N)}{N^a}\int_0^1(1-u^2)^{N+\psi(\aa;\bb)-1}du\le Ke^{\vert\Im z\vert}\frac{\log^{r-1}(N)}{N^{a+1/2}},
$$
with $C,K>0$ independent of $z$ and $N$, which is \eqref{bound1}.
\end{proof}

The following picture illustrates the accuracy and uniform character of approximation \eqref{expJtwo} for $p=2$, ${\bf a}=3$, ${\bf b}=(7/2,5)$ and real $z$.

\begin{figure}[H]
  \centering
   \includegraphics[width=6cm]{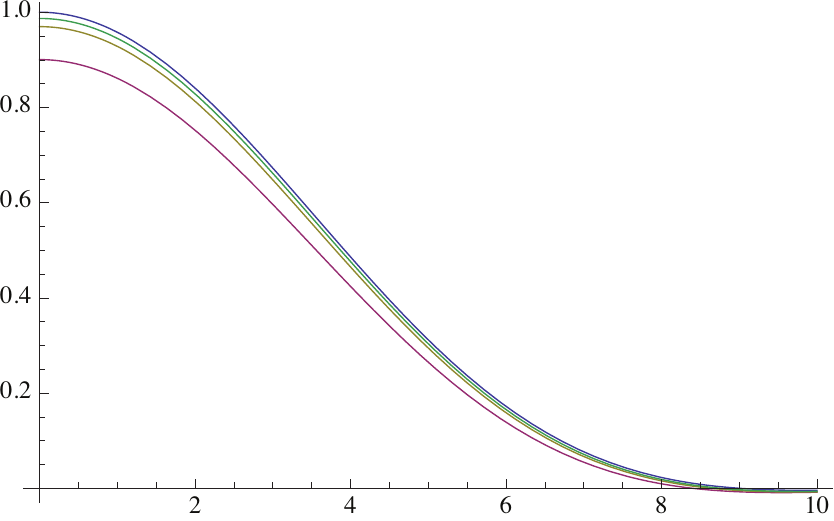}
\hskip 2cm
  \includegraphics[width=6cm]{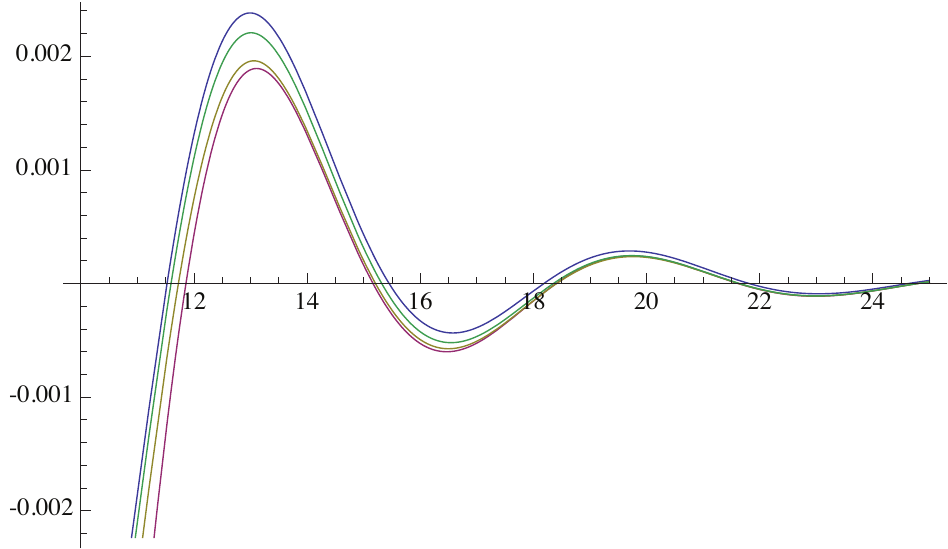}
  \caption{\small Plot of the left (blue) and right hand side of \eqref{expJtwo} on two intervals of the real axis. For $N=1,3,5$ (red, yellow and green, respectively) in the left picture and $N=1,10,20$ (red, yellow and green, respectively) in the right picture.}\label{figure2}
\end{figure}
\noindent

\textbf{Remark}. In view of the first formula in (\ref{eq:gnp2p3}), expansion (\ref{expJtwo}) for $p=2$ takes the form
$$
{}_{1}F_2\left.\left(\begin{matrix}a\\b_1,b_2\end{matrix}\:\right\vert -z^2/4\right)
=\frac{\Gamma(b_1)\Gamma(b_2)}{(z/2)^{\psi-1}\Gamma(a)}\sum_{n=0}^{\infty}\frac{(b_1-a)_n(b_2-a)_n}{n!(z/2)^{n}}J_{n+\psi-1}(z),
$$
where $\psi=b_1+b_2-a$.  Surprisingly, we could not find the above expansion in \cite{PBM2}.
On the other hand, \cite[eq.(5.7.8.3)]{PBM2} reads (after some change of notation):
$$
{}_1F_2\left.\left(\begin{matrix}a\\b_1,b_2\end{matrix}\:\right\vert -z^2/4\right)
=\frac{\Gamma(b_2)}{(z/2)^{b_2-1}}\sum\limits_{n=0}^{\infty}\frac{(b_1-a)_n}{(b_1)_nn!}(z/2)^nJ_{n+b_2-1}(z).
$$
As we have $(b_1)_nn!$ in the denominator, for small $z$ this series converges  faster than our expansion.
However, this expansion is not uniform in $z$ in any unbounded domain.

\subsection{An expansion in terms of elementary functions}

\begin{theorem}\label{th:GHFBesselelementary}
For $\aa\in\C^{p-1}$ with $\Re\aa>0$, $\bb\in\C^{p}$ and $z\in\C$, let $a$ and $r$ be the constants defined in Lemma~\ref{lm:gnestimate}. Then, if $\Re\psi(\aa;\bb)>1/2$,
for any $N\in\N$ we have
\begin{multline}\label{exptwos}
{_{p-1}F_{p}}\left(\left.\!\!\begin{array}{c}
\aa\\\bb\end{array}\right|-\frac{z^2}{4}\!\right)
=\frac{2\Gamma(\bb)}{\sqrt{\pi}\Gamma(\aa)}\sum\limits_{n=0}^{N-1}\frac{g_n(\aa;\bb)}{\Gamma(\psi(\aa;\bb)+n-1/2)}
\biggl\{P_{m}(z,\psi(\aa;\bb)+n-1)\frac{\sin{z}}{z}\\ -Q_{m}(z,\psi(\aa;\bb)+n-1)\cos{z}\biggr\}+R^T_N(z), \hskip 2cm m:=N+\lfloor\Re\psi(\aa;\bb)-3/2\rfloor,
\end{multline}
where $P_m(z,\nu)$ and $Q_m(z,\nu)$ are the following rational functions of $z$:
\begin{equation}\label{PQ}
\begin{array}{cl}
P_m(z,\nu):=&\displaystyle{\sum_{j=0}^m\frac{a_{m,j}(\nu)}{(-z^2)^j}, \hskip 2cm a_{m,j}(\nu):=\sum_{k=j}^m
\frac{(1/2-\nu)_k(2k)!}{k!(2(k-j))!},}\\\\\
Q_m(z,\nu):=&\displaystyle{\sum_{j=1}^m\frac{b_{m,j}(\nu)}{(-z^2)^j}, \hskip 2cm b_{m,j}(\nu):=\sum_{k=j}^m
\frac{(1/2-\nu)_k(2k)!}{k!(2(k-j)+1)!}}.
\end{array}
\end{equation}
The remainder is bounded in the form
\begin{equation}\label{cotaT}
\vert R^T_N(z)\vert\le Ke^{\vert\Im z\vert}\left[\frac{1}{N^{\Re\psi(\aa;\bb)-1/2}}+\frac{\log^{r-1}(N)}{N^{a+1/2}}\right],
\end{equation}
with $K>0$ independent of $N$ and $z$. Therefore, for $\Re\psi(\aa;\bb)>1/2$, \eqref{exptwos} converges uniformly with respect to $z$ in any horizontal
strip $\vert\Im z\vert<\Lambda$ with arbitrary $\Lambda>0$.
\end{theorem}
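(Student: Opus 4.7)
The plan is to substitute into the Bessel-type expansion \eqref{expJtwo} of Theorem~\ref{th:GHFBesselBessel} the uniformly convergent elementary-function expansion of the Bessel function $J_\nu(z)/(z/2)^\nu$ derived in \cite{confluent}. That result provides, for each $m\in\N$ and suitable $\nu$, a representation of the form
\[
\frac{J_\nu(z)}{(z/2)^\nu}=\frac{2}{\sqrt{\pi}\,\Gamma(\nu+1/2)}\Bigl\{P_m(z,\nu)\frac{\sin z}{z}-Q_m(z,\nu)\cos z\Bigr\}+\varrho_m(z,\nu),
\]
with $P_m(z,\nu)$ and $Q_m(z,\nu)$ the rational functions \eqref{PQ}, and a remainder $\varrho_m(z,\nu)$ controlled by $e^{|\Im z|}$ times an algebraic decay in $m$ whose rate depends on $\Re\nu$. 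Inserting this identity term-by-term in \eqref{expJtwo} with $\nu=\psi(\aa,\bb)+n-1$ and a \emph{common} truncation index $m$ reassembles the $\sin z/z$ and $\cos z$ contributions into the right-hand side of \eqref{exptwos}; since $P_m$ and $Q_m$ depend on $n$ only through the order $\nu$, this step is purely a collection of terms.

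Next, the total remainder decomposes as
\[
R_N^T(z)=R_N(z)+\frac{\Gamma(\bb)}{\Gamma(\aa)}\sum_{n=0}^{N-1}g_n(\aa;\bb)\,\varrho_m\bigl(z,\psi(\aa,\bb)+n-1\bigr),
\]
where $R_N(z)$ is the Bessel-level remainder from \eqref{expJtwo}. By \eqref{bound1}, the first summand is already bounded by $Ke^{|\Im z|}\log^{r-1}(N)/N^{a+1/2}$, which is the second term in \eqref{cotaT}. For the second summand, the plan is to combine the extended N\o rlund estimate \eqref{eq:Norlundestimate0} with the uniform bound on $\varrho_m$ to produce a weighted tail whose dominant contribution decays as $e^{|\Im z|}/N^{\Re\psi(\aa,\bb)-1/2}$. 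The offset $\lfloor\Re\psi(\aa,\bb)-3/2\rfloor$ in the definition $m=N+\lfloor\Re\psi(\aa,\bb)-3/2\rfloor$ is chosen precisely so that, after all normalisations and boundary terms arising in the elementary expansion of the Bessel function are taken into account, the exponent in the resulting bound matches the $\Re\psi-1/2$ appearing in \eqref{cotaT}. The assertion of uniform convergence in any horizontal strip $|\Im z|<\Lambda$ then follows immediately from the factor $e^{|\Im z|}$ and the algebraic decay of both pieces in $N$.

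The main obstacle is extracting from \cite{confluent} a bound on $\varrho_m(z,\nu)$ with sufficiently explicit dependence on both $m$ and $\nu$ to remain effective over the range $\nu\in\{\psi(\aa,\bb)-1,\psi(\aa,\bb),\ldots,\psi(\aa,\bb)+N-2\}$ appearing in the sum; once this is available, the summation against the N\o rlund weights is straightforward using the comparison series $\sum_{n\ge0}(n+1)^{-a-1}\log^{r-1}(n+2)$. A minor subsidiary check, immediate from the derivation of the Bessel expansion in \cite{confluent} by repeated integration by parts, is that $P_m$ and $Q_m$ in \eqref{PQ} are indeed the exact polynomials produced by that procedure at truncation level $m$, and that the prefactor $2/\bigl(\sqrt{\pi}\,\Gamma(\psi(\aa,\bb)+n-1/2)\bigr)$ absorbs the factor $\Gamma(\nu+1/2)$ coming from the normalisation used in \eqref{expJtwo}.
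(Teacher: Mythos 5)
Your proposal is correct and follows essentially the same route as the paper: substitute the elementary-function expansion of the Bessel kernel (which the paper takes from \cite{Lopez}, formula \eqref{expanJz}, rather than \cite{confluent}) into the expansion of Theorem~\ref{th:GHFBesselBessel}, split $R^T_N$ into $R_N$ plus the $g_n$-weighted sum of Bessel remainders, and estimate the latter via \eqref{eq:Norlundestimate0} together with the choice $m=N+\lfloor\Re\psi(\aa;\bb)-3/2\rfloor$. The ``main obstacle'' you flag is settled exactly as you anticipate, by quoting the bound $\vert r_m(z,\nu)\vert\le Ke^{\vert\Im z\vert}m^{-\Re\nu-1/2}$ for $m>\Re\nu-1/2$ from \cite{Lopez}: with $\nu=\psi(\aa;\bb)+n-1$ the exponent grows with $n$, giving $m^{-(n+\Re\psi(\aa;\bb)-1/2)}\le N^{-n}N^{-(\Re\psi(\aa;\bb)-1/2)}$, which makes the sum over $n$ converge and produces the first term of \eqref{cotaT}.
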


\begin{proof}
From \cite[eq.(9)]{Lopez} we have that, for $m=1,2,\ldots$, the Bessel function ${}_0F_{1}$ may be written as
\begin{equation}\label{expanJz}
\frac{\sqrt{\pi}}{2}\frac{\Gamma(\nu+1/2)}{\Gamma(\nu+1)}{}_0F_{1}\left(-;\nu+1;-\frac{z^2}{4}\right)=P_{m-1}(z,\nu)\frac{\sin z}{z}-Q_{m-1}(z,\nu)\cos z +r_m(z,\nu)
\end{equation}
with $P_{m-1}(z,\nu)$ and $Q_{m-1}(z,\nu)$ given in \eqref{PQ} and, if $m>\Re\nu-1/2$, the remainder $r_m(z,\nu)$ is bounded as follows:
\begin{equation}\label{cotaJ}
\vert r_m(z,\nu)\vert\le\frac{Ke^{\vert\Im z\vert}}{m^{\Re\nu+1/2}},
\end{equation}
with $K>0$ independent of $z$ and $m$.  Therefore, the remainder $r_m(z,\nu)$ is asymptotically equivalent to  $m^{-\Re\nu-1/2}$ as $m\to\infty$ uniformly in $z$ in any fixed horizontal strip.

Hence, substituting \eqref{expanJz} into \eqref{expJone} we obtain \eqref{exptwos} with
$$
R^T_N(z):=\frac{2\Gamma(\bb)}{\sqrt{\pi}\Gamma(\aa)}\sum\limits_{n=0}^{N-1}\frac{g_n(\aa;\bb)}{\Gamma(\psi(\aa;\bb)+n-1/2)}r_m(z,\psi(\aa;\bb)+n-1)+R_N(z)
$$
with $R_N(z)$ as in Theorem~\ref{th:GHFBesselBessel}. Here we need to choose $m=m(N,n)$ to make
$$
\sum\limits_{n=0}^{N-1}\frac{g_n(\aa;\bb)}{\Gamma(\psi(\aa;\bb)+n-1/2)}r_m(z,\psi(\aa;\bb)+n-1)
$$
converge to zero as $N\to\infty$. Using the estimates (\ref{eq:Norlundestimate0}) and \eqref{cotaJ}, with $m>n+\Re\psi(\aa;\bb)-3/2$ we get
$$
\left|\sum\limits_{n=0}^{N-1}\frac{g_n(\aa;\bb)}{\Gamma(\psi(\aa;\bb)+n-1/2)}
r_m(z,\psi(\aa;\bb)+n-1)\right|\leq
Ce^{\vert\Im z\vert}\sum\limits_{n=0}^{N-1}\frac{\log^{r-1}(n+2)}{(n+1)^{a+1/2}m^{n+\Re\psi(\aa;\bb)-1/2}}.
$$

Then, it is sufficient to take $m=N+\lfloor\Re\psi(\aa;\bb)-3/2\rfloor$ to obtain
\begin{multline*}
\left|\sum\limits_{n=0}^{N-1}\frac{g_n(\aa;\bb)}{\Gamma(\psi(\aa;\bb)+n-1/2)}
r_m(z,\psi(\aa;\bb)+n-1)\right|\leq
\\
\frac{Ce^{\vert\Im z\vert}}{N^{\Re\psi(\aa;\bb)-1/2}}\sum\limits_{n=0}^{N-1}\frac{\log^{r-1}(n+2)}{(n+1)^{a+1/2}N^n}\le\frac{Ke^{\vert\Im z\vert}}{N^{\Re\psi(\aa;\bb)-1/2}},
\end{multline*}
with $K>0$ independent of $z$ and $N$ and \eqref{cotaT} follows.
\end{proof}

Formula \eqref{ejemplo} is a particular case of \eqref{exptwos} for $N=2$. The following picture shows some more approximations.

\begin{figure}[H]
  \centering
   \includegraphics[width=6cm]{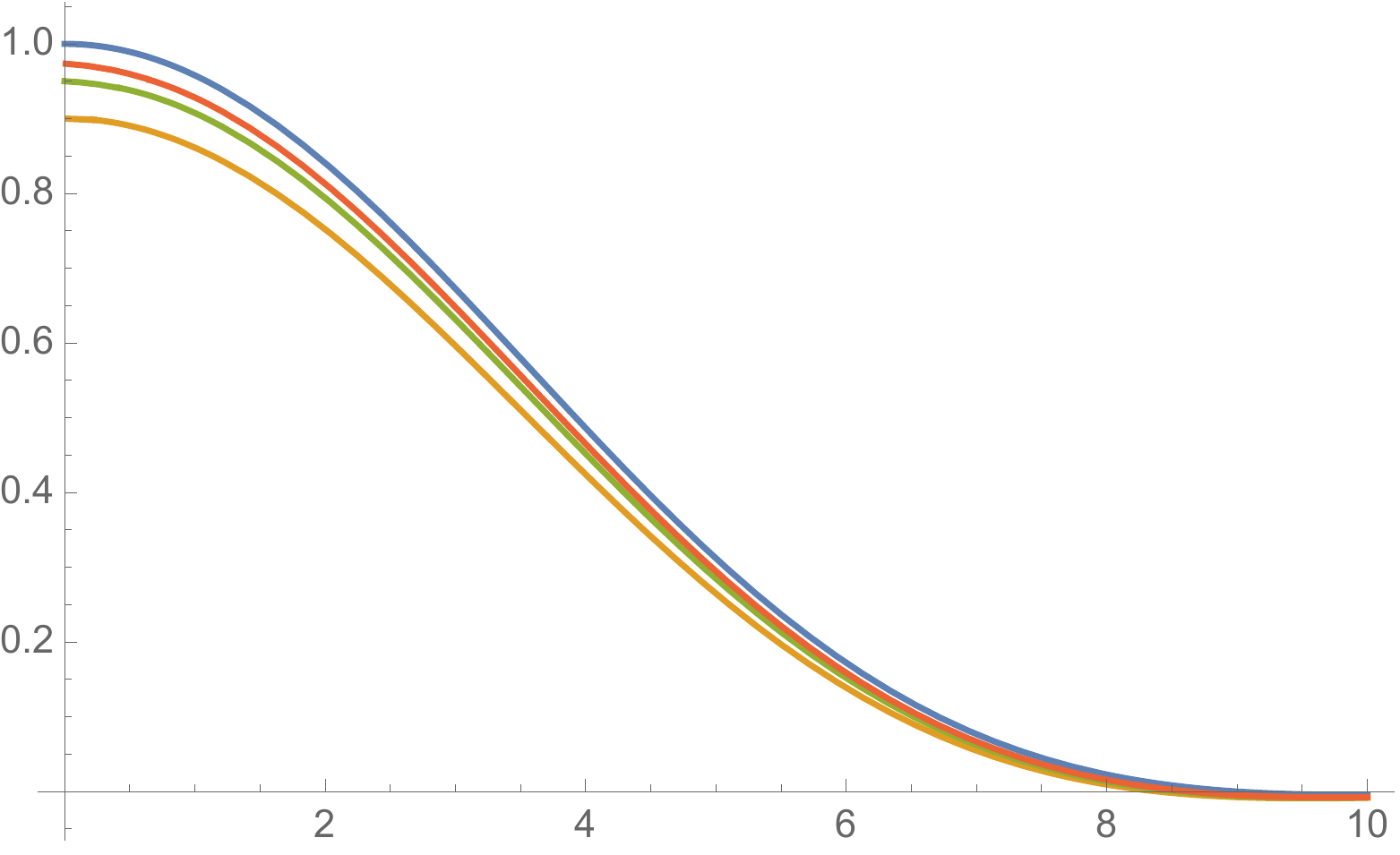}
\hskip 2cm
  \includegraphics[width=6cm]{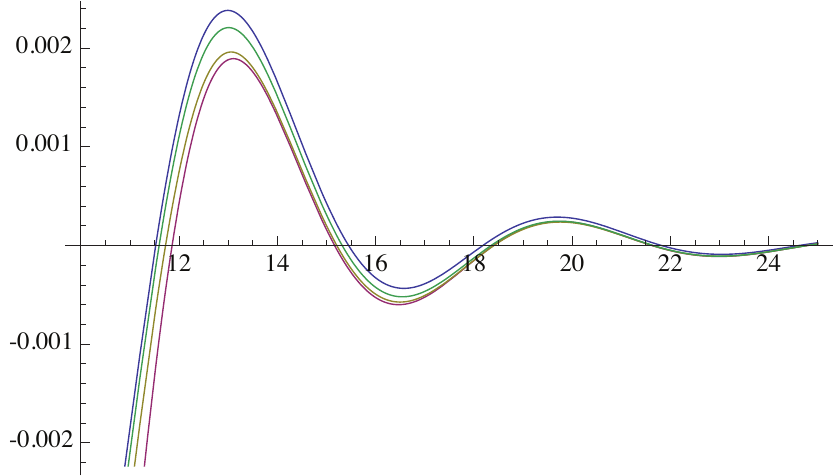}
  \caption{\small Plot of the left (blue) and right hand side of \eqref{exptwos} on two intervals of the real axis. For $N=1,3,5$ (red, yellow and green, respectively) in the left picture and $N=1,10,20$ (red, yellow and green, respectively) in the right picture.}\label{figure3}
\end{figure}
\noindent

\section{Expansions for the GHF of the Kummer type}

\subsection{An expansion in terms of the Kummer functions}

\begin{theorem}\label{th:pFpKummer}
Given $\aa\in\C^{p}$, assume without loss of generality that $\Re(a_p)=\min(\Re(\aa))$ and suppose that
$\Re(\aa_{[p]})>0$. Suppose further that $\bb\in\C^{p}$, $z\in\C$ and denote by $-\alpha$ the real part of the rightmost pole(s) of the function $s\to\Gamma(\aa_{[p]}+s)/\Gamma(\bb+s)$ and by $r\in\N$ the maximal multiplicity among all poles with the real part $-\alpha$.  Then, if $\Re(\psi(\aa;\bb))>0$, for any $N\in\N$ we have
\begin{equation}\label{expKtwo}
\left.{}_{p}F_p\left(\begin{matrix}\aa\\ \bb\end{matrix}\right\vert -z\right)=
{\Gamma(\bb)\over\Gamma(\aa_{[p]})}\,\sum_{n=0}^{N-1} \frac{g_n(\aa_{[p]};\bb)}{\Gamma(\psi(\aa_{[p]};\bb)+n)}\,M(a_p,\psi(\aa_{[p]};\bb)+n,-z)+R_N(z),
\end{equation}
where $M(a,b,z)$ is the Kummer function of the first kind, and the remainder is bounded in the form
\begin{equation}\label{cotaK}
\vert R_N(z)\vert\le KH(z)\frac{\log^{r-1}N}{N^{\alpha}},
\end{equation}
with $K>0$ independent of $N$ and $z$ and $H(z):=\max(1,e^{-\Re z})$. Therefore, expansion \eqref{expKtwo} is uniformly convergent for $z$ in any half-plane $\Re{z}\ge\Lambda$ with arbitrary $\Lambda\in\R$.
\end{theorem}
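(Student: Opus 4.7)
The plan is to mirror the strategy of Theorem~\ref{th:GHFBesselBessel}: rewrite the Meijer-Nørlund factor in \eqref{inidos} in Nørlund's canonical form by absorbing one parameter into a power of $t$, expand at the endpoint $t=1$ via \eqref{eq:Norlund}, integrate each term against $e^{-zt}$ to produce a Kummer function, and bound the tail uniformly on any right half-plane using $|e^{-zt}|\le H(z)$ for $t\in[0,1]$.

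First I would apply the shifting identity \eqref{eq:Gtimespower} with $\alpha=1-a_p$, yielding
$$
G^{p,0}_{p,p}\!\left(t\,\vline\begin{array}{l}\bb-1\\\aa-1\end{array}\!\right)=t^{a_p-1}G^{p,0}_{p,p}\!\left(t\,\vline\begin{array}{l}\bb-a_p\\\aa_{[p]}-a_p,\,0\end{array}\!\right),
$$
which places the $G$-factor in the form required by \eqref{eq:Norlund}. Expanding with the substitution $z=1-t$, and using the shift invariances $g_n(\aa_{[p]}-a_p;\bb-a_p)=g_n(\aa_{[p]};\bb)$ and $\psi(\aa_{[p]}-a_p;\bb-a_p)=\psi(\aa;\bb)$, I obtain a Taylor expansion of the $G$-factor at $t=1$. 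Substituting into \eqref{inidos} and interchanging sum and integral, each term contains the integral
$$
\int_0^1e^{-zt}t^{a_p-1}(1-t)^{\psi(\aa;\bb)+n-1}dt=\frac{\Gamma(a_p)\Gamma(\psi(\aa;\bb)+n)}{\Gamma(\psi(\aa_{[p]};\bb)+n)}M\bigl(a_p,\psi(\aa_{[p]};\bb)+n,-z\bigr),
$$
via the standard Kummer integral representation and the identity $\psi(\aa_{[p]};\bb)=\psi(\aa;\bb)+a_p$. Collapsing the $\Gamma$-factors using $\Gamma(\aa)=\Gamma(\aa_{[p]})\Gamma(a_p)$ and $(\psi(\aa;\bb))_n=\Gamma(\psi(\aa;\bb)+n)/\Gamma(\psi(\aa;\bb))$ reduces the expansion to exactly the sum in \eqref{expKtwo}.

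For the remainder, writing the tail explicitly as
$$
R_N(z)=\frac{\Gamma(\bb)}{\Gamma(\aa_{[p]})}\sum_{n\ge N}\frac{g_n(\aa_{[p]};\bb)}{\Gamma(\psi(\aa_{[p]};\bb)+n)}M\bigl(a_p,\psi(\aa_{[p]};\bb)+n,-z\bigr),
$$
I would estimate each $M$ by $H(z)$ times the Beta-type ratio $|\Gamma(\psi(\aa_{[p]};\bb)+n)|\,B(\Re a_p,\Re\psi(\aa;\bb)+n)/|\Gamma(a_p)\Gamma(\psi(\aa;\bb)+n)|$, obtained from the same Kummer integral representation and $|e^{-zt}|\le H(z)$ on $[0,1]$. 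Applying Lemma~\ref{lm:gnestimate} directly to $(\aa_{[p]},\bb)$ yields $|g_n(\aa_{[p]};\bb)/\Gamma(\psi(\aa_{[p]};\bb)+n)|=\mathcal{O}(\log^{r-1}(n)/n^{\alpha+1})$ with $\alpha,r$ matching the hypothesis, while Stirling gives $|\Gamma(\psi(\aa_{[p]};\bb)+n)/\Gamma(\psi(\aa;\bb)+n)|=\mathcal{O}(n^{\Re a_p})$ and $B(\Re a_p,\Re\psi(\aa;\bb)+n)=\mathcal{O}(n^{-\Re a_p})$. The two $n^{\Re a_p}$ powers cancel exactly, leaving the tail $\sum_{n\ge N}\log^{r-1}(n)/n^{\alpha+1}=\mathcal{O}(\log^{r-1}(N)/N^{\alpha})$, which gives \eqref{cotaK}. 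Uniform convergence on $\Re z\ge\Lambda$ then follows because $H(z)$ is bounded by $\max(1,e^{-\Lambda})$ there.

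The main obstacle I anticipate is the careful bookkeeping between two closely related versions of $\psi$: the one native to Nørlund's expansion for the shifted parameters, namely $\psi(\aa;\bb)$, and the one appearing as the second argument of $M$, namely $\psi(\aa_{[p]};\bb)=\psi(\aa;\bb)+a_p$. The crucial mechanism is the exact cancellation of the $n^{\Re a_p}$ factors between the Stirling ratio of $\Gamma$-functions and the Beta asymptotic; without it the exponent in \eqref{cotaK} would be polluted by $\Re a_p$ rather than depending purely on $\alpha$. Justifying the interchange of sum and integral is comparatively routine, thanks to the absolute convergence of the Nørlund series on $[0,1)$ and the integrability of $(1-t)^{\Re\psi(\aa;\bb)-1}$ near $t=1$ under $\Re\psi(\aa;\bb)>0$.
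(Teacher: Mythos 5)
Your proposal is correct and follows essentially the same route as the paper: the same shift of the Meijer--N{\o}rlund factor via \eqref{eq:Gtimespower} with $\alpha=1-a_p$, the same N{\o}rlund expansion \eqref{eq:Norlund} at $t=1$, the same Kummer (Euler-type) integral producing $M(a_p,\psi(\aa_{[p]};\bb)+n,-z)$, and the same tail estimate combining Lemma~\ref{lm:gnestimate} with $|e^{-zt}|\le H(z)$ and the cancellation of the $n^{\Re a_p}$ growth between the gamma ratio and the Beta factor. The only cosmetic difference is that you justify the boundedness of that gamma-ratio product by Stirling asymptotics, whereas the paper invokes a product representation of $\Gamma(x+iy)/\Gamma(x)$; both are adequate.
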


\begin{proof}
Assume without loss of generality that $\Re(a_p)=\min(\Re(\aa))$ (otherwise just exchange the indices $p$ and $\mathrm{argmin}(\aa))$.  The integral representation \cite[eq.(11)]{dimitri} of ${}_{p}F_p$ combined with \cite[Sec. 16.19, eq. (16.19.2)]{hyper} and the shifting property (\ref{eq:Gtimespower}) gives
\begin{equation}\label{ini2}
\left.{}_{p}F_p\left(\begin{matrix}\aa\\ \bb\end{matrix}\right\vert -z\right)=
{\Gamma(\bb)\over\,\Gamma(\aa)}\int_0^1e^{-zt}
G_{p,p}^{p,0}\left(t\left| \begin{matrix}\bb-1\\ \aa-1\end{matrix}\right.\right)dt
={\Gamma(\bb)\over\,\Gamma(\aa)}\int_0^1e^{-zt}t^{a_p-1}
G_{p,p}^{p,0}\left(t\left| \begin{matrix}\bb'\\ \aa',0\end{matrix}\right.\right)dt,
\end{equation}
where $\bb'=\bb-a_p$, $\aa'=\aa_{[p]}-a_p$.  Then, we are in the position to apply expansion (\ref{eq:Norlund})
to get:
$$
\left.{}_{p}F_p\left(\begin{matrix}\aa\\ \bb\end{matrix}\right\vert -z\right)
={\Gamma(\bb)\over\,\Gamma(\aa)}\sum\limits_{n=0}^{N-1}
\frac{\Gamma(a_p)\,\,g_n(\aa';\bb')}{\Gamma(\psi(\aa';\bb')+n)}\,M(a_p,\psi(\aa';\bb')+n+a_p,-z)+R_N(z),
$$
where
$$
R_N(z)={\Gamma(\bb)\over\,\Gamma(\aa)}\int_0^1e^{-zt}t^{a_p-1}(1-t)^{\psi(\aa';\bb')-1}dt\sum\limits_{n=N}^{\infty}
\frac{g_n(\aa';\bb')}{\Gamma(\psi(\aa';\bb')+n)}(1-t)^n.
$$
Now by the shifting property of N{\o}rlund's coefficients and definition (\ref{eq:psi-defined}) we have
$$
g_n(\aa';\bb')=g_n(\aa_{[p]};\bb)~~\text{and}~~\psi(\aa';\bb')=\psi(\aa;\bb),
$$
so that we arrive at expansion (\ref{expKtwo}) with the remainder given by (after termwise integration)
$$
R_N(z)={\Gamma(\bb)\over\,\Gamma(\aa)}\sum\limits_{n=N}^{\infty}
\frac{g_n(\aa_{[p]};\bb)}{\Gamma(\psi(\aa;\bb)+n)}\int_0^1e^{-zt}t^{a_p-1}(1-t)^{\psi(\aa;\bb)+n-1}dt.
$$
In view of  $\psi(\aa;\bb)+a_p=\psi(\aa_{[p]};\bb)$, we have
\begin{multline*}
\left|\int_0^1e^{-zt}t^{a_p-1}(1-t)^{\psi(\aa;\bb)+n-1}dt\right|\leq\int_0^1e^{-\Re(z)t}t^{\Re(a_p)-1}(1-t)^{\Re(\psi(\aa;\bb))+n-1}dt \\
\le H(z)\frac{\Gamma(\Re(a_p))\Gamma(\Re(\psi(\aa;\bb))+n)}{\Gamma(\Re(\psi(\aa_{[p]};\bb))+n)},
\end{multline*}
where $H(z)$ is defined in the statement of the theorem. Now, if we write $-\alpha$ for the real part of the rightmost pole(s) of the function $s\to\Gamma(\aa_{[p]}+s)/\Gamma(\bb+s)$ and   $r\in\N$ for the maximal multiplicity among all poles with the real part $-\alpha$, Lemma~\ref{lm:gnestimate} yields
$$
\left\vert\frac{g_n(\aa_{[p]};\bb)}{\Gamma(\psi(\aa;\bb)+n)}\right\vert
=\left\vert\frac{g_n(\aa_{[p]};\bb)\Gamma(\psi(\aa_{[p]};\bb)+n)}{\Gamma(\psi(\aa_{[p]};\bb)+n)\Gamma(\psi(\aa;\bb)+n)}\right\vert
\le\frac{K_1\log^{r-1}(n)}{n^{\alpha+1}}\left\vert\frac{\Gamma(\psi(\aa;\bb)+a_p+n)}{\Gamma(\psi(\aa;\bb)+n)}\right\vert.
$$
Combining these bounds we obtain
$$
|R_N(z)|\le {K}H(z)|\Gamma(\Re(a_p))|\left|{\Gamma(\bb)\over\,\Gamma(\aa)}\right|\sum\limits_{n=N}^{\infty}\lambda_n\frac{\log^{r-1}(n)}{n^{\alpha+1}}
$$
for some positive constant $K$ independent of $n$ and $z$, and
$$
\lambda_n:=\left\vert\frac{\Gamma(\psi(\aa_{[p]};\bb)+n)}{\Gamma(\psi(\aa;\bb)+n)}\frac{\Gamma(\Re(\psi(\aa;\bb))+n)}{\Gamma(\Re(\psi(\aa_{[p]};\bb))+n)}\right\vert
$$
The representation \cite[eq.1.3(3)]{Bateman} ($\gamma$ is the Euler-Mascheroni constant)
$$
\frac{\Gamma(x+iy)}{\Gamma(x)}=\frac{xe^{i\gamma{y}}}{x+iy}\prod_{k=1}^{\infty}\frac{e^{i\gamma/k}}{1+i\gamma/(k+x)},
$$
valid for all $x+iy\in\C$, implies that the sequence $\lambda_n$ is bounded by a constant.  Hence by Lemma~\ref{lm:remainder} we get the bound (\ref{cotaK}).
\end{proof}

The following picture illustrates the accuracy and the uniform character of approximation \eqref{expKtwo} for $p=2$, ${\bf a}=(1,3/2)$, ${\bf b}=(2,3)$ and real $z$.

\begin{figure}[H]
  \centering
   \includegraphics[width=6cm]{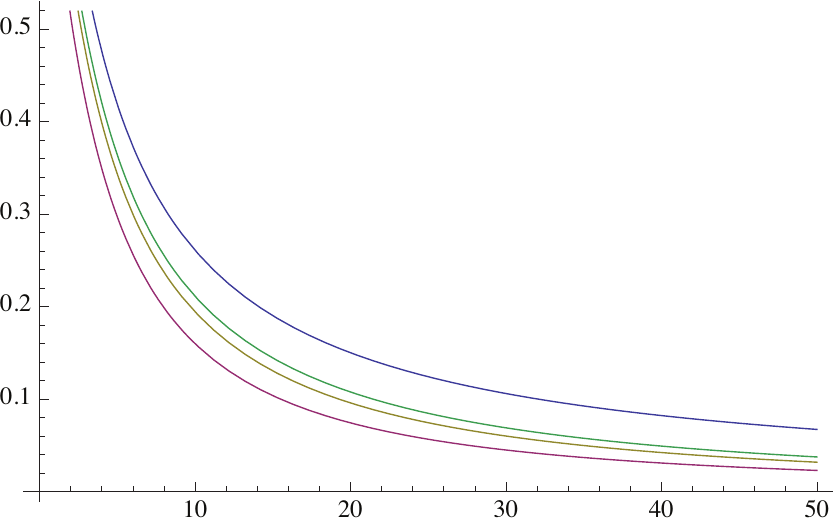}
\hskip 2cm
  \includegraphics[width=6cm]{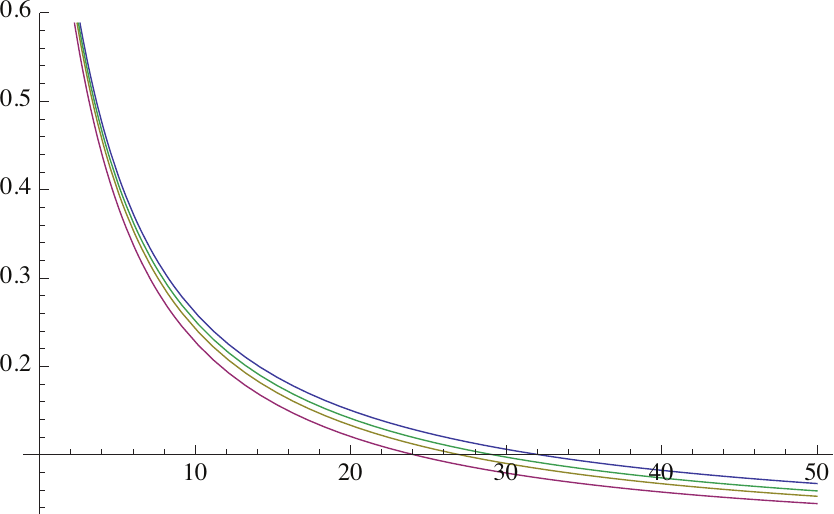}
  \caption{\small Plot of the left (blue) and right hand side of \eqref{expKtwo} for $z$ in the real interval $[0,50]$. For $N=10,20,30$ (red, yellow and green, respectively) in the left picture and $N=50,100,200$ (red, yellow and green, respectively) in the right picture.}\label{figure4}
\end{figure}
\noindent

\subsection{An expansion in terms of elementary functions}

\begin{theorem}\label{th:GHFKummerelementary}
Given $\aa,\bb\in\C^{p}$, assume that $\Re(\aa)>0$ and let $\alpha$ and $r$ have the same meaning as in Theorem~\ref{th:pFpKummer}.
Then, for $\Re\psi(\aa;\bb)>0$ for any $N\in\N$ we have
\begin{multline}\label{exptwo}
{_{p}F_{p}}\left(\left.\!\!\begin{array}{c}
\aa\\\bb\end{array}\right|-z\!\right)
=\frac{\Gamma(\bb)}{\Gamma(\aa_{[p]})}\sum\limits_{n=0}^{N}\frac{g_n(\aa_{[p]};\bb)}{\Gamma(\psi(\aa_{[p]};\bb)+n)}
\left[\sum_{k=0}^{m-1}A_k(a_p,\psi(\aa_{[p]};\bb)+n)F_k(-z)\right]+R^T_N(z),
\end{multline}
with $m:=N+\lfloor\Re\psi(\aa_{[p]};\bb)\rfloor$, and
$$
F_n(z):=\frac{n!}{(-z)^{n+1}}\left[e_n\left(\frac{z}{2}\right)-e^{z}e_n\left(-\frac{z}{2}\right)\right],\hspace{2cm} e_n(z):= \sum_{k=0}^{n}\frac{z^k}{k!},
$$
$$
A_n(a,b):=2^{n+2-b}\frac{(a+1-b)_n}{n!}\frac{\Gamma(b)}{\Gamma(a)\Gamma(b-a)}{}_2F_1\left(\left.
\begin{array}{cl}
1-a, \hskip 2mm -n\\\\
b-a-n
\end{array}
\right\vert -1\right).
$$
The remainder term is bounded in the form
\begin{equation}\label{cotaKK}
\left|R^T_N(z)\right|\leq K\,H(z)\left[\frac{\log^{r-1}N}{N^{\alpha}}+\frac{1}{N^{\Re a_p}}\right],
\end{equation}
with $K>0$ independent of $z$ and $N$, $H(z):=\max(1,e^{-\Re z})$. Therefore, expansion \eqref{exptwo} is uniformly convergent any half plane $\Re z\ge\Lambda$ with arbitrary  $\Lambda\in\mathbb{R}$.
\end{theorem}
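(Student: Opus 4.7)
The plan mirrors the proof of Theorem~\ref{th:GHFBesselelementary}: start from the Kummer-type expansion (\ref{expKtwo}) of Theorem~\ref{th:pFpKummer}, feed an elementary-function expansion of each Kummer factor into it, and balance the two remainder contributions by letting the inner truncation order grow linearly with $N$. In the Bessel case the corresponding tool was (\ref{expanJz})--(\ref{cotaJ}); here the analogue is the elementary-function expansion of the Kummer function established in \cite{Lopez}, which reads
$$
M(a,b,-z)=\sum_{k=0}^{m-1}A_k(a,b)F_k(-z)+\rho_m(z;a,b),
$$
with $A_k$ and $F_k$ as in the theorem statement and with remainder bounded by $|\rho_m(z;a,b)|\le C\,H(z)/m^{\Re a}$ uniformly in $b$ for $\Re z\ge\Lambda$, once $m$ exceeds a threshold that grows linearly with $\Re b$.

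Substituting this expansion into (\ref{expKtwo}) yields the principal sum of (\ref{exptwo}) together with the aggregate remainder
$$
R^T_N(z)=R_{N+1}(z)+\frac{\Gamma(\bb)}{\Gamma(\aa_{[p]})}\sum_{n=0}^{N}\frac{g_n(\aa_{[p]};\bb)}{\Gamma(\psi(\aa_{[p]};\bb)+n)}\,\rho_m\bigl(z;\,a_p,\,\psi(\aa_{[p]};\bb)+n\bigr),
$$
where $R_{N+1}(z)$ is the Kummer-expansion remainder of Theorem~\ref{th:pFpKummer}, already controlled by (\ref{cotaK}). To bound the new piece I apply Lemma~\ref{lm:gnestimate} to the coefficient ratio $g_n(\aa_{[p]};\bb)/\Gamma(\psi(\aa_{[p]};\bb)+n)$ and combine it with the Kummer estimate $|\rho_m|\le C\,H(z)/m^{\Re a_p}$. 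The hypothesis $\Re\aa>0$ forces $\alpha>0$, so the series $\sum_{n\ge0}\log^{r-1}(n+2)/(n+1)^{\alpha+1}$ converges. Choosing $m=N+\lfloor\Re\psi(\aa_{[p]};\bb)\rfloor$ simultaneously meets the threshold condition $m>\Re\psi(\aa_{[p]};\bb)+n-\Re a_p$ for every $n\in\{0,\ldots,N\}$ (for $N$ large enough) and produces a bound $KH(z)/N^{\Re a_p}$ on the new sum. Adding this to (\ref{cotaK}) yields (\ref{cotaKK}).

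The principal technical obstacle is verifying that the remainder $\rho_m$ of the elementary expansion of $M(a,b,-z)$ from \cite{Lopez} can be controlled \emph{uniformly} in its second argument as $b$ ranges over $\psi(\aa_{[p]};\bb)+n$ for $0\le n\le N$. This uniformity was equally crucial in the Bessel proof, where (\ref{cotaJ}) was uniform in $\nu$ once $m>\Re\nu-1/2$, and it is what licenses the common choice of $m$ across all outer indices $n$. Once the uniform Kummer bound is in hand, the remainder bookkeeping is a routine repetition of the Bessel-case argument, with the strip factor $e^{|\Im z|}$ replaced by the half-plane factor $H(z)=\max(1,e^{-\Re z})$ that already appears in (\ref{cotaK}) and thus propagates without modification.
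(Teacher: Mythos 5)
Your proposal is correct and follows essentially the same route as the paper: substitute the elementary-function expansion of the Kummer function into \eqref{expKtwo}, bound the coefficient ratios via Lemma~\ref{lm:gnestimate} (inequality \eqref{eq:Norlundestimate0}), and take $m=N+\lfloor\Re\psi(\aa_{[p]};\bb)\rfloor$ so the new remainder contributes $O(H(z)N^{-\Re a_p})$ on top of \eqref{cotaK}; your explicit observation that $\Re\aa>0$ forces $\alpha>0$ makes a step the paper leaves implicit. The only slip is bibliographic: the elementary expansion of $M(a,b,-z)$ with its bound \eqref{cotaM} comes from the confluent-hypergeometric paper \cite{confluent}, not from the Bessel paper \cite{Lopez}.
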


\begin{proof}
According to \cite[(21),(28)]{confluent} for each $n=1,2\ldots$, we can write the Kummer function $M(a,b;z)$ in the form:
\begin{equation}\label{expanM}
M(a,b;-z)=\sum_{k=0}^{m-1}A_k(a,b)F_k(-z)+r_m(a,b;-z),
\end{equation}
with $A_k(a,b)$ and $F_k(z)$ defined above. We also have that, for $m>\Re(b-1)$, the remainder $r_m(a,b;z)$ is bounded in the form:
\begin{equation}\label{cotaM}
\vert r_m(a,b;-z)\vert\le\frac{KH(z)}{m^{\beta}}, \hskip 2cm \beta:=\min\lbrace\Re a,\Re(b-a)\rbrace,
\end{equation}
with $K>0$ independent of $z$ and $n$ and $H(z)=\max(1,e^{-\Re{z}})$.  Therefore, the remainder $r_m(a,b;z)$ behaves as $m^{-\beta}$ as $m\to\infty$ uniformly in $z$ in any half-plane of the form  $\Re{z}\ge\Lambda$, $\Lambda\in\R$.

Hence, substituting \eqref{expanM} into \eqref{expKtwo} we obtain \eqref{exptwo} with
$$
R^T_N(z):=\frac{\Gamma(\bb)}{\Gamma(\aa_{[p]})}\sum\limits_{n=0}^{N-1}\frac{g_n(\aa_{[p]};\bb)}{\Gamma(\psi(\aa_{[p]};\bb)+n)}r_m(a_p,\psi(\aa_{[p]};\bb)+n;-z)+R_N(z).
$$
Next, we need to choose $m=m(N,n)$ to make
$$
\sum\limits_{n=0}^{N-1}\frac{g_n(\aa_{[p]};\bb)}{\Gamma(\psi(\aa_{[p]};\bb)+n)}r_m(a_p,\psi(\aa_{[p]};\bb)+n;-z)
$$
converge to zero as $N\to\infty$.

Assuming that $m>\Re(\psi(\aa_{[p]};\bb))+n-1$, we can use the estimate \eqref{cotaM} with $\beta=\Re(a_p)$ in view of $\Re(\psi(\aa_{[p]};\bb))>0$ and estimate (\ref{eq:Norlundestimate0}) to get
$$
\left|\sum\limits_{n=0}^{N-1}\frac{g_n(\aa_{[p]};\bb)}{\Gamma(\psi(\aa_{[p]};\bb)+n)}r_m(a_p,\psi(\aa_{[p]};\bb)+n;-z)\right|\leq
CH(z)\sum\limits_{n=0}^{N-1}\frac{\log^{r-1}(n+2)}{(n+1)^{\alpha+1}m^{\Re a_p}}.
$$
Then, it is sufficient to take $m=N+\lfloor\Re\psi(\aa_{[p]};\bb)\rfloor$ and we obtain
$$
\left|\sum\limits_{n=0}^{N-1}\frac{g_n(\aa_{[p]};\bb)}{\Gamma(\psi(\aa_{[p]};\bb)+n)}r_m(a_p,\psi(\aa_{[p]};\bb)+n;-z)\right|\leq
C\frac{H(z)}{N^{\Re a_p}}\sum\limits_{n=0}^{N-1}\frac{\log^{r-1}(n+2)}{(n+1)^{\alpha+1}}\le K\frac{H(z)}{N^{\Re a_p}},
$$
with $C,K>0$ independent of $z$ and $N$ which implies \eqref{cotaKK}.
\end{proof}

The following picture illustrates the accuracy and the uniform character of approximation \eqref{exptwo} for $p=2$, ${\bf a}=(1,3/2)$, ${\bf b}=(2,3)$ and real $z$.

\begin{figure}[H]
  \centering
   \includegraphics[width=6cm]{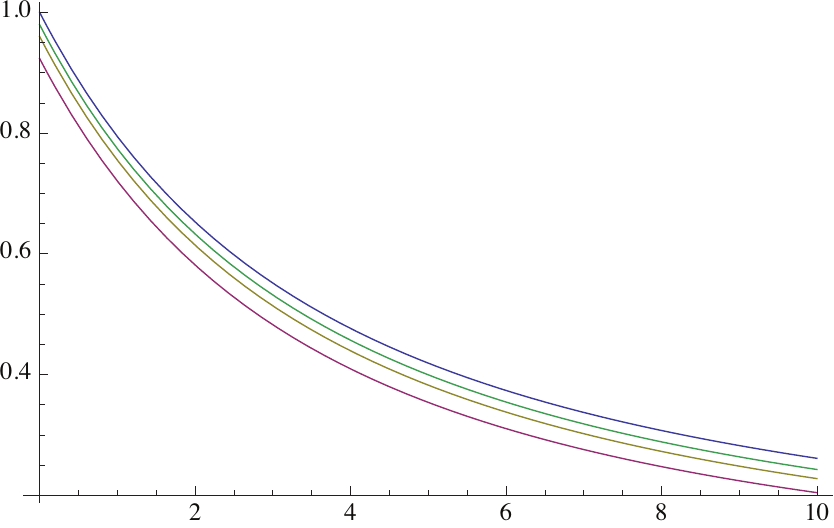}
\hskip 2cm
  \includegraphics[width=6cm]{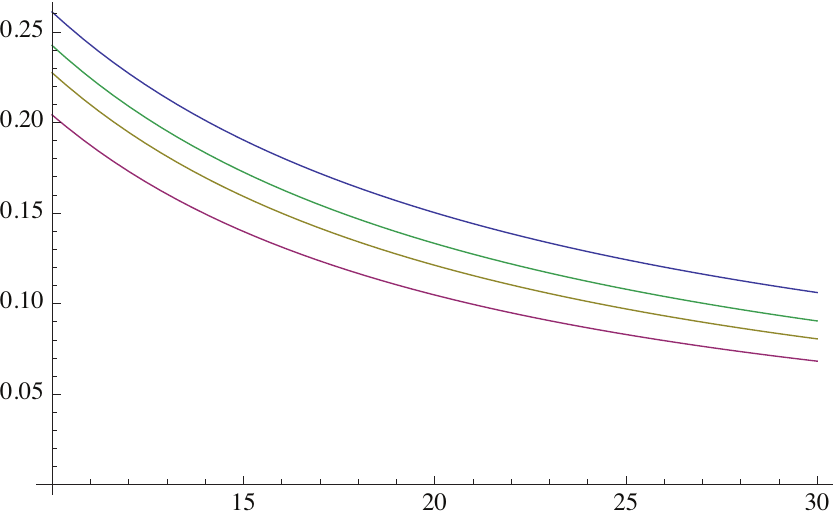}
  \caption{\small Plot of the left (blue) and right hand side of \eqref{exptwo} for $N=20,40,80$ (red, yellow and green, respectively) for $z$ in two different real intervals.}\label{figure5}
\end{figure}
\noindent

\bigskip
\bigskip

\section{Acknowledgments}

The authors L\'opez and Pagola acknowledge the {\it Direcci\'on General de Ciencia y Tecnolog\'{\i}a} (REF. MTM2017-83490-P) for its financial support.

\footnotesize{

}

\end{document}